\newtheorem{thrm}{Theorem}[section]
\newtheorem{lem}[thrm]{Lemma}
\theoremstyle{definition}
\newtheorem{definition}[thrm]{Definition}
\newtheorem{remark}[thrm]{Remark}
\numberwithin{equation}{section}
\author{E. Shemyakova}
\address{
Mathematics Department, University of Western Ontario, London, Ontario, Canada
\&
Dorodnicyn Computing Centre of Russian Academy of Sciences, Moscow, Russia.
}
\email{shemyakova.katya@gmail.com}
\thanks{}
\keywords{Completeness of Darboux Wronskian formulas, completeness of Darboux determinants, 
Darboux transformations, invariants for solution of PDEs}
\subjclass{Primary  53Z05, Secondary 35Q99}
\newcommand{\Sym}{\ensuremath \mathrm{Sym}}
\newcommand{\Ker}{\ensuremath \mathop \mathrm{Ker} \nolimits}
\def\o#1{\ensuremath \mathcal{#1}}
\begin{document}

\title[Completeness of Darboux Wronskian Formulae]{Proof of the Completeness of Darboux Wronskian Formulae for Order Two}

\begin{abstract}
Darboux Wronskian formulas allow to construct Darboux transformations, but Laplace transformations, which are Darboux transformations of order one
cannot be represented this way. 
It has been a long standing problem on what are other exceptions. In our previous work we proved that among transformations of total 
order one there are no other exceptions. Here we prove that for transformations of total order two there are no exceptions at all.
We also obtain a simple explicit invariant description of all possible Darboux Transformations of total order two. \\
Mathematics Subject Classification Numbers: 53Z05, 35Q99.
\end{abstract}
\maketitle

\section{Introduction}
\label{intro}
Classical Darboux transformations and their generalizations are methods for obtaining analytic solutions of linear Partial Differential Equations (PDEs).
They also serve as a leverage for larger theories for solution of non-linear PDEs, see for example~\cite{matveev:1991:darboux} and references therein.

In the present paper we are concerned with the intertwining relations $\o{N} \circ \o{L} = \o{L}_1 \circ \o{M}$ for operators of the form 
\begin{equation} \label{op:L}
\o{L}  = D_{x} D_y + a D_x + b D_y + c  \ ,
\end{equation}
where the coefficients may be non constant. Since they have been introduced in the classical work of~\cite{Darboux2},
we shall call them Darboux transformations too. PDEs corresponding to such operators appear also
as part of the problem of the search of flat metrics, see~\cite{krichever:97}. 

Given Linear Partial Differential Operator (LPDO) $\o{L}$ and some LPDO $\o{M}$, the coefficients of the resulting operator $\o{L}_1$ and 
of the auxiliary operator $\o{N}$ can be found algebraically. There are two choices of $\o{M}$, which always lead to a DT for a given operator~\eqref{op:L}:
$\o{M} = D_x + b$, and $\o{M} = D_y + a$. 
These Darboux transformations have a special name: Laplace transformations. The latter are not be confused with Laplace integral transforms.

There is also a large class of Darboux transformations generated by operators $\o{M}$ that are constructed using so-called Darboux Wronskian formulas. 
These are based on the assumption that we know some number of linearly independent particular solutions of the initial PDE, $\o{L} \psi =0$. 
This class is a very large class and Darboux transformations of arbitrary orders can be constructed provided we know enough number of particular solutions. 
Laplace transformations, which are Darboux transformations of order one do not belong to this class. 

Laplace transformations are particularly good Darboux transformations, see~\cite{ts:genLaplace05}, and they have been the only known
examples of Darboux transformations that cannot be described by Darboux Wronskian formulas. 
In~\cite{Laplace_only_degenerate_2012} we have proved that a Darboux transformation of total order one is either described by Darboux Wronskians or is a Laplace transformation.
The problem reduces to solution of a non-linear PDE. The PDE was not so large and noticing some interesting structure we were able
to tackle the problem.

After that work it was still unclear whether there are some exceptional transformations, that is such that cannot be described by 
Darboux Wronskian formulas among Darboux transformations of orders higher than one. 
This problem is reducing to solution of a system of two large non-linear PDEs, for which methods of the previous work~\cite{Laplace_only_degenerate_2012}
were hard to apply. We, however, succeeded in proving that Darboux Wronskian formulas complete for transformations of order two in a different and rather elegant fashion, and 
present this proof in this paper. 

%

Recently there have been several new ideas to tackle Darboux transformations and related problems. Thus, \cite{tsarev2008categories_forDT} and~\cite{Cas:Singer2011} have made very important progress in the description of factorizable operators 
corresponding to linear PDEs in terms of certain abelian categories and algebraic groups, respectively. In the present paper, we adopt
an approach that is based on ideas of Differential Geometry, and is constructive. 

Our main result is an elegant proof that all Darboux transformations of total order two can be described by Wronskian formulae
(Theorem~\ref{thm:completeness}). The second achievement is an easy to use invariant description of all these Darboux transformations 
(Theorem~\ref{thm:simple}). 

The paper is organized as follows. Darboux transformations of total order two are those that has $\o{M}$ 
in one of the following forms:
\begin{eqnarray*}
\o{M}&=& m_{20}D_{xx} + m_{10}D_{x}  +m_{00}\ ,\\
\o{M}&=& m_{02}D_{yy} + m_{01}D_{y} +m_{00}\ ,\\
\o{M}&=& m_{10}D_{x} +m_{01}D_{y} +m_{00}\ ,
\end{eqnarray*}
where the $m_{ij} \in K$ are not necessarily constant. 
In Sec.~\ref{sec:proj} we show that to cover all Darboux transformations of total order two it is enough to consider $\o{M}$ is the form $D_x+qD_y+r$, where $p$ and $q$ are some functions.
After some preparation in the next two sections, we introduce in Sec.~\ref{sec:gauge:evolution} new transformations of the pair $\{\o{L}, \o{M}\}$, which we name \emph{gauged evolution}.
We determine the generating invariants uniquely defining the equivalence classes under these transformations and use them to invariantize
the nonlinear system of PDEs defining all possible Darboux transformations of total order two. 
The invariantized system is easier and can be solved explicitly by classical methods, however, even though we have a technical solution, it is in quadratures and 
it is useful neither for invariant description of Darboux transformations, nor to judge whether Wronskian Formulae give all such Darboux transformations or not.

Therefore, we need a further invention, Theorem~\ref{thm:i30}, through which we are able to obtain an elegant general solution
(Theorem~\ref{thm:simple}) of the invariantized system of PDE.
We still have to remember that even if the invariantized system of PDEs has solutions, the existence of Darboux transformations depends also on
the existence of a solution of a nonlinear PDE system~(\ref{sys:inv:evolution}),
where we return from gauged evolution invariants to the coefficients
of operators $\o{L}$ and $\o{M}$.
In the proof of Theorem~\ref{thm:completeness} we resolve this problem and conclude that
for every Darboux transformation of total order 2 there exist two linearly independent partial
 solutions of $\o{L} u =0$, such that it can be constructed using Darboux Wronskian formulas.

\section{Preliminaries}

Let $K$ be a differential field of characteristic zero with commuting derivations $\partial_x, \partial_y$.
Let $K[D]=K[D_x, D_y]$ be the corresponding ring of linear
partial differential operators over $K$, where $D_x, D_y$ correspond to derivations $\partial_x, \partial_y$.

Operators $\o{L} \in K[D]$ have the general form
$\o{L} = \sum_{i+j =0}^d a_{ij}D_x^i D_y^j$, where $a_{ij} \in K$. The formal polynomial
$\Sym_\o{L} =  \sum_{i+j = d} a_{ij} X^i Y^j$ in some formal variables $X, Y$ is called the \emph{symbol} of $\o{L}$.

One can either assume field $K$ to be either differentially closed, in other words containing all the solutions
of, in general nonlinear, Partial Differential Equations (PDEs)
with coefficients in $K$, or simply assume that $K$ contains the solutions of those PDEs that we encounter on the way.

\begin{definition}
An operator $\o{L}_1 \in K[D]$ is called a {\it Darboux transformation}
of an operator $\o{L} \in K[D]$, if $\Sym(\o{L}) =\Sym (\o{L}_1)$, and there exist operators
$\o{N} \in K[D]$ and $\o{M} \in K[D]$ such that
\begin{equation} \label{eq:main}
 \o{N} \circ \o{L} = \o{L}_1 \circ \o{M} \ .
\end{equation}
In this case we say that this Darboux transformation corresponds to pair $\{\o{L}, \o{M}\}$, and
that operator $\o{L}_1$ is \emph{associated with}, or \emph{Darboux-conjugated to}
operator $\o{L}$
and use the notation
\[
 \o{L}_1 = \varphi(\o{L}, \o{M}, \o{N}) \ .
\]
Note that coefficients of the operators are not required to be constants.
\end{definition}
Darboux transformation implies the following transformations of kernels: $\Ker \o{L} \to \Ker \o{L}_1: \; \psi  \mapsto \o{M}(\psi)$
and requires $\Sym(\o{M}) =\Sym (\o{N})$.
\begin{definition} \label{def:lapl} The Darboux transformation of an operator~\eqref{op:L},
where $a,b,c$ are not required to be constants, is called a \emph{Laplace transformation}
if the corresponding operator $\o{M}$ is either $\o{M} = D_x + b$, or $\o{M} = D_y + a$.
\end{definition}
Laplace transformations are the most well-studied case of Darboux transformation and have several important properties, see~\cite{Darboux2}.

One of the most famous results in~\cite{Darboux2} concerns Darboux transformations for operators of the form~(\ref{op:L}) and can be formulated as follows.
\begin{thrm}[Darboux]
\label{thm:main:Darboux}
Let we $\o{L}$ be an operator of the form~\eqref{op:L} and $\psi_1, \dots , \psi_{m+n} \in \Ker \o{L}$ be linearly independent then
\begin{equation} \label{eq:wron}
\o{M} (\psi)=W_{m,n}(\psi, \psi_1, \dots , \psi_{m+n})
\end{equation}
defines some Darboux transformation for operator $\o{L}$.
\end{thrm}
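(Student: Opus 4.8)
The plan is to reduce the statement to the case of one solution and then to iterate, using that Darboux transformations compose. First I would treat the elementary case $m+n=1$. If $\theta\in\Ker\o{L}$ with $\theta\neq 0$, then the operator attached to $W_{1,0}(\psi,\theta)$ is, up to the nonzero left factor $\theta\in K$, the first-order operator $\o{M}_x^{\theta}:=D_x-(\partial_x\theta)/\theta$ (which annihilates $\theta$), and symmetrically $W_{0,1}$ yields $\o{M}_y^{\theta}:=D_y-(\partial_y\theta)/\theta$. For $\o{M}=\o{M}_x^{\theta}$ one writes the intertwining relation~\eqref{eq:main} with the ansatz forced by the symbol conditions, $\o{N}=D_x+g$ (since $\Sym(\o{N})=\Sym(\o{M})=X$) and $\o{L}_1=D_xD_y+a_1D_x+b_1D_y+c_1$ (since $\Sym(\o{L}_1)=\Sym(\o{L})=XY$), equates the coefficients of the monomials $D_x^iD_y^j$, and checks that the resulting triangular system for $g,a_1,b_1,c_1$ is solvable precisely because $\o{L}\theta=0$. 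This is the classical first Darboux transformation~\cite{Darboux2,Laplace_only_degenerate_2012}; in particular $\o{L}_1$ is again of the form~\eqref{op:L}, which is what lets the construction iterate.

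Next I would record two elementary permanence properties. First, Darboux transformations compose: if $\o{N}_1\o{L}=\o{L}_1\o{M}_1$ and $\o{N}_2\o{L}_1=\o{L}_2\o{M}_2$, then $(\o{N}_2\o{N}_1)\o{L}=\o{L}_2(\o{M}_2\o{M}_1)$ and $\Sym(\o{L}_2)=\Sym(\o{L})$, so $\o{M}_2\o{M}_1$ is a Darboux transformation of $\o{L}$ whenever $\o{M}_1$ is for $\o{L}$ and $\o{M}_2$ is for $\o{L}_1$. Second, the Darboux property is unaffected if one multiplies $\o{M}$ on the left by a nonzero $h\in K$ (replace $\o{L}_1$ by $h\o{L}_1h^{-1}$, still of symbol $XY$, and $\o{N}$ by $h\o{N}$) or adds to $\o{M}$ an element of the left ideal $K[D]\,\o{L}$ (if $\o{M}\mapsto\o{M}+\o{R}\o{L}$, replace $\o{N}$ by $\o{N}-\o{L}_1\o{R}$); equivalently, it depends only on the action of $\o{M}$ on $\Ker\o{L}$, so the reductions $D_xD_y\psi=-aD_x\psi-bD_y\psi-c\psi$ valid on $\Ker\o{L}$ may be used freely.

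The heart of the argument is a Crum-type determinant identity, proved by induction on $m+n$: the operator $\o{M}$ attached to $W_{m,n}(\psi,\psi_1,\dots,\psi_{m+n})$ agrees, on $\Ker\o{L}$ and up to a left factor in $K$, with a composition of $m+n$ elementary operators, $m$ of $x$-type and $n$ of $y$-type. One peels off one solution at a time: put, say, $\o{M}_1=\o{M}_x^{\psi_{m+n}}$, let $\o{L}^{(1)}$ be the associated operator, which is again of the form~\eqref{op:L}, and note that $\o{M}_1\psi_1,\dots,\o{M}_1\psi_{m+n-1}\in\Ker\o{L}^{(1)}$ are again linearly independent. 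Row operations on the determinant then relate $W_{m,n}$, modulo $K[D]\,\o{L}$ and up to a factor in $K$, to $W_{m-1,n}$ of these images, so that by induction the operator of $W_{m,n}$ equals $h\cdot\o{M}^{(1)}\circ\o{M}_1$ on $\Ker\o{L}$, with $\o{M}^{(1)}$ the operator of that smaller Wronskian; after $m$ such $x$-steps and $n$ $y$-steps one reaches $W_{0,0}$, i.e. a multiplication operator. Combining this with the previous two steps exhibits $\o{M}$, up to the harmless modifications just described, as a composition of elementary Darboux transformations, hence as a Darboux transformation, with $\o{L}_1$ produced along the chain. I expect the main obstacle to be exactly this third step: establishing the determinant identity in the two-variable setting, where the rows of $W_{m,n}$ are the \emph{pure} derivatives $\psi,D_x\psi,\dots,D_x^m\psi,D_y\psi,\dots,D_y^n\psi$ rather than successive derivatives in one variable, and verifying that after each elementary step the Wronskian of the transformed solutions has exactly the same mixed shape so that the induction closes; one must also invoke at each stage the appropriate nonvanishing of the leading minors (a generalized-Wronskian argument for linearly independent solutions) in order to know that $\o{M}$ really has order $m+n$.
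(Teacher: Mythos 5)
First, note that the paper itself does not prove Theorem~\ref{thm:main:Darboux}: it is stated as Darboux's classical result and referred to~\cite{Darboux2}, so there is no in-paper argument to measure yours against. Your strategy --- peel off one solution at a time with an elementary first-order operator $D_x-\theta_x/\theta$ (resp.\ $D_y-\theta_y/\theta$), use that Darboux transformations compose, and do the bookkeeping modulo left multiplication by elements of $K$ and modulo the left ideal $K[D]\circ\o{L}$ --- is the classical Crum-type route, and your permanence properties are exactly the paper's Lemma~\ref{lem:AL}, Lemma~\ref{lem:div} and the composition lemma of Sec.~\ref{sec:proj} (small sign slip: $\o{M}\mapsto\o{M}+\o{R}\circ\o{L}$ forces $\o{N}\mapsto\o{N}+\o{L}_1\circ\o{R}$, not $\o{N}-\o{L}_1\circ\o{R}$). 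The base case is essentially right: $W_{1,0}(\psi,\theta)=\psi\theta_x-\theta\psi_x=-\theta\,(D_x-\theta_x/\theta)\psi$, and the resulting system for $g,a_1,b_1,c_1$ is overdetermined ($5$ equations, $4$ unknowns) with a single compatibility condition that holds because $\o{L}\theta=0$.

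The gap is the one you yourself flag: the two-variable Crum identity in your third step is asserted, not proved, and it is the entire content of the theorem. Two concrete issues have to be resolved there. (i) The rows of $W_{m,n}$ are \emph{pure} derivatives $D_x^i\psi$, $D_y^j\psi$, but $D_y^j(\o{M}_1\psi)$ produces mixed derivatives $D_y^jD_x\psi$; these must be rewritten on $\Ker\o{L}$ via $D_xD_y\psi=-aD_x\psi-bD_y\psi-c\psi$, and one must then verify that the reduced determinant really equals, up to a single left factor in $K$, the Wronskian $W_{m-1,n}$ of the transformed solutions. This is a genuine determinant computation, not a routine row operation, and without it the induction does not close. (ii) The induction also needs $\o{M}_1\psi_1,\dots,\o{M}_1\psi_{m+n-1}$ to remain linearly independent. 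The natural argument (a vanishing combination forces $\sum c_i\psi_i\in\Ker\o{M}_1\cap\Ker\o{L}$, hence a constant multiple of $\theta$) breaks down precisely when $\theta_x+b\theta=0$: in that case $\o{L}(f(y)\theta)=f'(y)(\theta_x+b\theta)=0$ for every $f$, so $\Ker\o{M}_1\cap\Ker\o{L}$ is infinite-dimensional (and indeed $\o{M}_1=D_x+b$ is then a Laplace transformation, not a Wronskian one). Similarly the whole construction degenerates when leading minors of $W_{m,n}$ vanish. Until (i) is carried out and the degenerate cases in (ii) are either excluded by hypothesis or handled by a judicious choice of which solution to peel and whether to take an $x$-step or a $y$-step, what you have is a correct and well-chosen plan rather than a proof.
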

\noindent
Here $W_{m,n}$ is a Wronskian-like function $W_{m,n}(\psi, \psi_1, \dots , \psi_{m+n}) = $
\[
\left|
\begin{array}{lllllll}
 \psi & D_x\psi &    \dots &  D^m_x\psi & D_y \psi  &   \dots &  D^n_y\psi \\
 \psi_1 & D_x\psi_1  &   \dots &  D^m_x\psi_1 & D_y \psi_1 &   \dots &  D^n_y\psi_1 \\
  \dots  &  \dots&  \dots&    \dots&  \dots&  \dots&   \dots \\
 \psi_{m+n} & D_x\psi_{m+n}  &   \dots &  D^m_x\psi_{m+n} & D_y \psi_{m+n}  &   \dots &  D^n_y\psi_{m+n} \\
\end{array}
\right|
\]
That is a Darboux transformation of order $m+n$ can be built using $m+n$ particular solutions of the initial equation $\o{L}(\psi)=0$, see~\cite{Darboux2}.
Darboux Wronskian-like formulas~(\ref{eq:wron}) provides a large class of possible Darboux transformations.

\section{Normalization of Darboux transformations}
\label{sec:proj}
\subsection{Normalization of Darboux transformations using expansion}
\begin{lem}
\label{lem:AL}
Let $\o{L}$ be of the form~\eqref{op:L} and $\o{M}$ be an operator of arbitrary order from $K[D]$.
Let $\o{M}$ define at least one Darboux transformation for operator $\o{L}$,
then for any given operator $\o{A} \in K[D]$ there exists also a Darboux transformation for the same $\o{L}$ with
$\o{M}=\o{M}+\o{A} \circ \o{L}$.
\end{lem}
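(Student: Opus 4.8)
The plan is to observe that the perturbation $\o{A}\circ\o{L}$ can be absorbed entirely into the auxiliary operator, leaving the conjugated operator unchanged. Suppose the pair $\{\o{L},\o{M}\}$ already gives a Darboux transformation, so there are $\o{N},\o{L}_1\in K[D]$ with $\Sym(\o{L})=\Sym(\o{L}_1)$ and $\o{N}\circ\o{L}=\o{L}_1\circ\o{M}$ as in~\eqref{eq:main}. Set $\o{M}'=\o{M}+\o{A}\circ\o{L}$. I would then try the same $\o{L}_1$ together with a corrected $\o{N}$.

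The verification is the one-line associativity computation $\o{L}_1\circ\o{M}'=\o{L}_1\circ\o{M}+\o{L}_1\circ\o{A}\circ\o{L}=\o{N}\circ\o{L}+\o{L}_1\circ\o{A}\circ\o{L}=(\o{N}+\o{L}_1\circ\o{A})\circ\o{L}$, so that putting $\o{N}'=\o{N}+\o{L}_1\circ\o{A}\in K[D]$ gives exactly $\o{N}'\circ\o{L}=\o{L}_1\circ\o{M}'$, i.e. the intertwining relation~\eqref{eq:main} for $\{\o{L},\o{M}'\}$ with conjugated operator $\o{L}_1$. The symbol requirement needs no work: $\Sym(\o{L}_1)=\Sym(\o{L})$ holds by hypothesis, and the compatibility $\Sym(\o{N}')=\Sym(\o{M}')$ noted in the remark after the definition follows automatically by taking symbols in $\o{N}'\circ\o{L}=\o{L}_1\circ\o{M}'$, using multiplicativity of symbols under composition and cancelling the nonzero element $\Sym(\o{L})=\Sym(\o{L}_1)$ in the integral domain $K[X,Y]$. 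Hence $\o{L}_1=\varphi(\o{L},\o{M}',\o{N}')$ is a Darboux transformation of $\o{L}$ with the required $\o{M}'$.

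There is essentially no obstacle: the entire content is the algebraic identity above, plus the facts that $K[D]$ is closed under composition and that $\o{L}$ appears on the right in~\eqref{eq:main}. The only point worth flagging is that the statement imposes no constraint on the order or shape of $\o{A}$ or of the resulting $\o{M}'$, and the argument indeed works verbatim for an arbitrary $\o{A}\in K[D]$; the specific form~\eqref{op:L} of $\o{L}$ is not used. In the subsequent normalization this freedom is precisely what lets one reduce $\o{M}$ modulo $\o{L}$ — for instance, to remove monomials divisible by the leading symbol $D_xD_y$ of $\o{L}$.
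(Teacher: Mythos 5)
Your proof is correct and coincides with the paper's own argument: both absorb $\o{A}\circ\o{L}$ by replacing $\o{N}$ with $\o{N}+\o{L}_1\circ\o{A}$ and keeping the same $\o{L}_1$, via the identical one-line associativity computation. Your additional remarks on the symbol conditions are fine but not needed beyond what the paper records.
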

\begin{proof} Equality~(\ref{eq:main}) implies that
\[\o{L}_1 \circ (\o{M} +\o{A} \circ \o{L}) =  \o{M}_1 \circ \o{L} + \o{L}_1 \circ \o{A} \circ \o{L} =
(\o{M}_1 + \o{L}_1 \circ \o{A}) \circ \o{L}\]
 is true for an arbitrary operator $\o{A} \in K[D]$.
 Therefore, there exists a Darboux transformation for $\o{L}$ with $\o{M}=\o{M}+\o{A} \circ \o{L}$.
\end{proof}
\begin{definition}
Lemma~\ref{lem:AL} describes transformations of pairs of operators $\{\o{L},\o{M}\}$. It shows that such transformations
preserve the property of the existence of Darboux transformations for a given operator $\o{L}$,
 and splits the operators $\o{M}$ into equivalence classes.
 We name this transformation an \textit{expansion}.
\end{definition}
\begin{remark} Notice that the resulting operators of the initial Darboux transformation and of the one generated for $\o{L}$ by $\o{M}+\o{A} \circ \o{L}$
are the same.
\end{remark}

Given operator $\o{L}$ of the form~\eqref{op:L}, we shall be considering different pairs  $\{\o{L},\o{M}\}$, where 
$\o{M}$ are operators in $K[D]$. Using expansion we can eliminate all the mixed derivatives in $\o{M}$ in the case of $\o{L}$ of the form (\ref{op:L}).
\begin{definition} Let $\o{L} \in K[D]$ be  of the form (\ref{op:L}) and $\o{M} \in K[D]$ be an arbitrary operator, then
denote the result of elimination of the mixed derivatives in $\o{M}$ using $\o{L}$
as $\pi_{\o{L}}(\o{M})$.
\end{definition}
\begin{definition} Given $\o{L} \in K[D]$ of the form (\ref{op:L}), we define the bi-degree $\deg_{\o{L}} \o{M} =(m,n)$
of operator $\o{M}$ with respect to $\o{L}$ as follows:
$m$ is the highest derivative with respect to $D_x$ in $\pi_{\o{L}}(\o{M})$  and $n$ is that with respect to $D_y$.
We shall say that $m+n$ is the \textit{total degree} of $\o{M}$.
\end{definition}
\begin{definition} By the \textit{degree} or \textit{total degree} of a Darboux transformation $\o{L}_1=\varphi(\o{L}, \o{M}, \o{N})$ we shall understand
the degree or the total degree of $\o{M}$.
\end{definition}
\subsection{Normalization of Darboux transformations using composition with Laplace transformations}
\begin{definition}
Let there be a Darboux transformation of arbitrary $\o{L} \in K[D]$ defined by some $\o{M} \in K[D]$, that is~(\ref{eq:main}) holds. Let
the result, operator $\o{L}_1 \in K[D]$ be transformed into some $\o{L}_2 \in K[D]$ by a Darboux transformation defined by some $\o{M}_1 \in K[D]$, that is
$\o{N}_1 \circ \o{L}_1 = \o{L}_2 \circ \o{M}_1$ for some $\o{N}_1 \in K[D]$. Then \textit{the composition of these two Darboux transformations}
is a Darboux transformation transforming $\o{L}$ into $\o{L}_2$ defined by $\o{N}_1 \circ \o{N} \circ \o{L} =\o{L}_2 \circ \o{M}_1\circ \o{M}$.
\end{definition}

The following lemma allows us to use expansion and composition together. 
\begin{lem}[Correctness of the composition of two Darboux transformation with expansion]
The result of composition of two Darboux transformations does not depend on the choice of the operator $\o{M}$ within its class of equivalence
under expansion.
\end{lem}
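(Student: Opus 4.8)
The plan is to unwind both definitions and check that the expansion ambiguity in the first of the two composed transformations is absorbed into a harmless expansion ambiguity in the resulting composite. Concretely, suppose we have a Darboux transformation of $\o{L}$ defined by $\o{M}$, producing $\o{L}_1$ via $\o{N} \circ \o{L} = \o{L}_1 \circ \o{M}$, followed by a Darboux transformation of $\o{L}_1$ defined by $\o{M}_1$, producing $\o{L}_2$ via $\o{N}_1 \circ \o{L}_1 = \o{L}_2 \circ \o{M}_1$. By the definition of composition the composite is governed by
\[
\o{N}_1 \circ \o{N} \circ \o{L} = \o{L}_2 \circ \o{M}_1 \circ \o{M},
\]
so the composite Darboux transformation corresponds to the pair $\{\o{L}, \o{M}_1 \circ \o{M}\}$ (and auxiliary operator $\o{N}_1 \circ \o{N}$).

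Next I would replace $\o{M}$ by an equivalent operator $\o{M}' = \o{M} + \o{A} \circ \o{L}$ for arbitrary $\o{A} \in K[D]$, as in Lemma~\ref{lem:AL}. By the Remark following that lemma the resulting operator of the first transformation is unchanged, i.e.\ the same $\o{L}_1$; in fact one has $\o{L}_1 \circ \o{M}' = (\o{N} + \o{L}_1 \circ \o{A}) \circ \o{L}$, so the first transformation is now governed by $\o{N}' \circ \o{L} = \o{L}_1 \circ \o{M}'$ with $\o{N}' = \o{N} + \o{L}_1 \circ \o{A}$. Since the intermediate operator $\o{L}_1$ is literally the same, the second transformation (defined by the unchanged $\o{M}_1$ and $\o{N}_1$) still produces the same $\o{L}_2$. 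The composite is then governed by
\[
\o{N}_1 \circ \o{N}' \circ \o{L} = \o{L}_2 \circ \o{M}_1 \circ \o{M}',
\]
and the point is to compute $\o{M}_1 \circ \o{M}'$ and compare it with $\o{M}_1 \circ \o{M}$.

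The key computation is the short identity
\[
\o{M}_1 \circ \o{M}' = \o{M}_1 \circ \o{M} + \o{M}_1 \circ \o{A} \circ \o{L} = \o{M}_1 \circ \o{M} + (\o{M}_1 \circ \o{A}) \circ \o{L},
\]
so $\o{M}_1 \circ \o{M}'$ differs from $\o{M}_1 \circ \o{M}$ precisely by $\o{B} \circ \o{L}$ with $\o{B} = \o{M}_1 \circ \o{A} \in K[D]$; that is, the two candidate operators $\o{M}_1 \circ \o{M}'$ and $\o{M}_1 \circ \o{M}$ for the composite transformation lie in the same expansion equivalence class with respect to $\o{L}$. By Lemma~\ref{lem:AL} and the accompanying Remark, the Darboux transformation obtained from the pair $\{\o{L}, \o{M}_1 \circ \o{M}'\}$ is well defined and has the same resulting operator $\o{L}_2$ as the one obtained from $\{\o{L}, \o{M}_1 \circ \o{M}\}$. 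Hence the composite is independent of the representative chosen for $\o{M}$ within its expansion class, which is the assertion.

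I do not expect a genuine obstacle here: the argument is purely formal manipulation in the operator ring $K[D]$, and the only subtlety worth stating carefully is that changing $\o{M}$ does not change the intermediate operator $\o{L}_1$ (so the second transformation is untouched) — this is exactly the content of the Remark after Lemma~\ref{lem:AL}. If anything, the mild care needed is notational: distinguishing the pair $\{\o{L}, \o{M}\}$ that indexes a transformation from the triple $\varphi(\o{L}, \o{M}, \o{N})$, and noting that the claim is about the resulting operator $\o{L}_2$ rather than about the auxiliary operators, which may legitimately change.
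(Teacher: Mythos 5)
Your argument is correct as far as it goes, and for the claim as literally worded (independence of the choice of representative for the first operator $\o{M}$) it is complete and cleaner than the paper's computation: replacing $\o{M}$ by $\o{M}+\o{A}\circ\o{L}$ leaves the intermediate operator $\o{L}_1$ unchanged, and $\o{M}_1\circ(\o{M}+\o{A}\circ\o{L})=\o{M}_1\circ\o{M}+(\o{M}_1\circ\o{A})\circ\o{L}$ is manifestly in the expansion class of $\o{M}_1\circ\o{M}$. The paper's proof, however, establishes something stronger: it varies both representatives simultaneously, replacing $\o{M}$ by $\o{M}+\o{A}\circ\o{L}$ and $\o{M}_1$ by $\o{M}_1+\o{B}\circ\o{L}_1$, so that the composition descends to expansion classes in both factors. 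That second variation is precisely where the one genuinely non-formal step lives: expanding $(\o{M}_1+\o{B}\circ\o{L}_1)\circ(\o{M}+\o{A}\circ\o{L})$ produces the cross term $\o{B}\circ\o{L}_1\circ\o{M}$, which is not visibly of the form $(\cdot)\circ\o{L}$; the paper disposes of it by substituting the first intertwining relation $\o{L}_1\circ\o{M}=\o{N}\circ\o{L}$, turning it into $\o{B}\circ\o{N}\circ\o{L}$. Your proposal never meets this term because you hold $\o{M}_1$ fixed. If the lemma is to deliver what the paper actually uses --- well-definedness of the composite on the equivalence classes of both $\o{M}$ and $\o{M}_1$ --- you should add the case $\o{M}_1\mapsto\o{M}_1+\o{B}\circ\o{L}_1$ together with that substitution step; as written, your argument covers only the easier half.
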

\begin{proof} Let there be a Darboux transformation of arbitrary $\o{L} \in K[D]$ defined by some $\o{M} \in K[D]$, i.e.,~(\ref{eq:main}) holds.
Let
the result, operator $\o{L}_1 \in K[D]$ be transformed into some $\o{L}_2 \in K[D]$ by a Darboux transformation defined by some $\o{M}_1 \in K[D]$,
i.e.,~$\o{N}_1 \circ \o{L}_1 = \o{L}_2 \circ \o{M}_1$ for some $\o{N}_1 \in K[D]$.
Consider $(\o{M} + \o{A} \circ \o{L})$ and $\o{M}_1 + \o{B} \circ \o{L}_1$ for some $\o{A}, \o{B} \in K[D]$,
which belongs to the same classes of equivalence under the expansion as $\o{M}$ and $\o{M}_1$ correspondingly.
That is we have:
\begin{eqnarray*}
 (\o{N} + \o{L}_1 \circ \o{A}) \circ \o{L} &=& \o{L}_1 \circ (\o{M} + \o{A} \circ \o{L})\ , \nonumber \\
 (\o{N}_1 + \o{L}_2 \circ \o{B}) \circ \o{L}_1 &=& \o{L}_2 \circ (\o{M}_1 + \o{B} \circ \o{L}_1)\ . \label{DT2}
\end{eqnarray*}
Then the composition is
\[(\o{N}_1 + \o{L}_2 \circ \o{B}) \circ  (\o{N} + \o{L}_1 \circ \o{A}) \circ \o{L} =
   (\o{N}_1 + \o{L}_2 \circ \o{B}) \circ \o{L}_1 \circ (\o{M} + \o{A} \circ \o{L}) \ ,
\]
which using equality~(\ref{DT2}) can be re-written as
\[
(\o{N}_1 + \o{L}_2 \circ \o{B}) \circ  (\o{N} + \o{L}_1 \circ \o{A}) \circ \o{L} =
  \o{L}_2 \circ (\o{M}_1 + \o{B} \circ \o{L}_1) \circ (\o{M} + \o{A} \circ \o{L}) \ .
\]
After expanding some multiples and
re-grouping we have
\[(\o{N}_1 \circ \o{N} + \o{L}_2 \circ \o{C} + \o{N}_1 \circ  \o{L}_1 \circ \o{A})
  \circ \o{L} = \o{L}_2 \circ (\o{M}_1 \circ \o{M} + \o{E} \circ \o{L} + \o{B} \circ \o{L}_1 \circ \o{M}) \ ,
\]
where $\o{C}=\o{B} \circ \o{N}+\o{B} \circ \o{L}_1 \circ \o{A}$, and $\o{E}=\o{M}_1 \circ \o{A}+\o{B} \circ \o{L}_1 \circ  \o{A}$.
Finally, substituting $\o{N} \circ \o{L}$ instead of $\o{L}_1 \circ \o{M}$ we obtain that the ``$\o{M}$'' operator of this Darboux transformation belongs to the same equivalence class that  $\o{M}_1 \circ \o{M}$ does under the expansion transformation.
\end{proof}

Then one of the results of~\cite{Darboux2} can be interpreted as follows:
\begin{thrm}
Let $\o{L} \in K[D]$ be of the form (\ref{op:L}) and $\o{M} \in K[D]$
define a Darboux transformation for $\o{L}$, and $\deg_{\o{L}} \o{M} =(m,n)$. Let
$\o{M}_x = D_y + a$ and $\o{M}_y =D_x + b$ define LTs for operator~(\ref{op:L}). Then
\begin{enumerate}
\item $\deg \pi(\o{M} \circ \o{M}_x) = (m-1,n+1)$,
\item $\deg \pi( \o{M} \circ \o{M}_x) = (m+1,n-1)$,
\item $\pi(\o{M}_x \circ \o{M}_y) = b_y -c+ab$, which is an operator of order zero,
\item $\pi(\o{M}_y \circ \o{M}_x) = a_x -c+ab$, which is an operator of order zero.
\end{enumerate}
\end{thrm}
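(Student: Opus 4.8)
The statement is a collection of four claims about the bi-degrees (with respect to $\o{L}$) of compositions of a given $\o{M}$ with the two Laplace operators $\o{M}_x = D_y + a$ and $\o{M}_y = D_x + b$, together with an identification of the two order-zero operators $\pi(\o{M}_x\circ\o{M}_y)$ and $\pi(\o{M}_y\circ\o{M}_x)$. Since all of this is stated for $\o{L}$ of the form~\eqref{op:L}, the engine throughout is the rewriting rule coming from $\o{L} = D_xD_y + aD_x + bD_y + c$, namely that modulo the left ideal generated by $\o{L}$ one may replace $D_xD_y$ by $-aD_x - bD_y - c$. This is exactly the operation $\pi_{\o{L}}$ (equivalently, reduction by expansion, which by Lemma~\ref{lem:AL} and the remark following it does not change the resulting operator $\o{L}_1$). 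So the plan is: compute each composition as a genuine element of $K[D]$, then apply $\pi_{\o{L}}$ and read off the highest powers of $D_x$ and $D_y$ that survive.

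For claims (3) and (4), which I would do first as a warm-up, the computation is short and completely explicit. We have $\o{M}_x\circ\o{M}_y = (D_y+a)(D_x+b) = D_yD_x + bD_y + aD_x + ab + b_y = D_xD_y + aD_x + bD_y + ab + b_y$; reducing $D_xD_y \mapsto -aD_x - bD_y - c$ gives $\pi(\o{M}_x\circ\o{M}_y) = ab + b_y - c$, an operator of order zero, which is claim (3). Symmetrically $\o{M}_y\circ\o{M}_x = (D_x+b)(D_y+a) = D_xD_y + aD_x + bD_y + ab + a_x$, reducing to $\pi(\o{M}_y\circ\o{M}_x) = ab + a_x - c$, which is claim (4). (These two quantities are of course the classical Laplace invariants $h$ and $k$ of $\o{L}$ up to sign; the fact that composing the two Laplace transformations in either order returns an operator of order zero is the standard statement that each Laplace transformation inverts the other.)

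For claims (1) and (2) — and here I note the two claims as printed both say $\o{M}\circ\o{M}_x$, so one of them should read $\o{M}\circ\o{M}_y$; I will prove $\deg\pi(\o{M}\circ\o{M}_x) = (m-1,n+1)$ and $\deg\pi(\o{M}\circ\o{M}_y) = (m+1,n-1)$ — the argument is a leading-term analysis. Write $\o{M}$ already in reduced form $\pi_{\o{L}}(\o{M})$, so that $\o{M} = \sum_{i} p_i D_x^i + \sum_{j\ge 1} q_j D_y^j$ has no mixed terms, with $\deg_{\o{L}}\o{M} = (m,n)$ meaning $p_m \ne 0$ and $q_n \ne 0$. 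Composing on the right with $\o{M}_x = D_y + a$: the term $p_m D_x^m$ produces $p_m D_x^m D_y + \text{(lower)}$, and $D_x^m D_y = D_x^{m-1}(D_xD_y) \equiv D_x^{m-1}(-aD_x - bD_y - c) \pmod{\o{L}}$ after iterating the reduction, which lowers the $D_x$-degree to $m-1$ while creating $D_y$-terms of degree at most $1$ multiplied by $D_x$-powers; meanwhile the term $q_n D_y^n$ produces $q_n D_y^{n+1}$, which is already reduced and is the unique term of top $D_y$-degree $n+1$. Hence the reduced composition has $D_x$-degree $m-1$ and $D_y$-degree $n+1$, giving (1); the computation for $\o{M}_y = D_x + b$ is the mirror image, exchanging the roles of $x$ and $y$, and gives (2).

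**Main obstacle.** The only delicate point is bookkeeping: one must check that in the reduction of $D_x^m D_y$ (resp. $D_y^n D_x$) the coefficient that would otherwise kill the claimed leading term does not vanish — i.e. that no cancellation drops the $D_x$-degree below $m-1$ or the $D_y$-degree below $n+1$. This amounts to checking that reducing $p_m D_x^m D_y$ contributes $-a\,p_m D_x^m$ plus strictly lower-order-in-$D_x$ stuff, and that $D_x^m$ (degree $m$ in $D_x$!) — wait, this is exactly where care is needed: $D_x^{m-1}\cdot(-aD_x) = -aD_x^m + \text{lower}$, so naively the $D_x$-degree looks like it stays at $m$, not $m-1$. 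The resolution is that $D_x^m$ is a pure $D_x$-power and hence already in reduced form, so it must be compared against the pure-$D_x$ part of $\o{M}\circ\o{M}_x$ itself, which contributes $a p_m D_x^m$ from the term $p_m D_x^m \cdot a$; these cancel, and what remains at top $D_x$-order is $D_x^{m-1}$. Making this cancellation transparent — i.e. organizing the computation so the $D_x^m$ terms are manifestly seen to cancel — is the one place the proof needs to be written with care rather than waved through; everything else is routine linear algebra in $K[D]$ modulo the relation defining $\pi_{\o{L}}$.
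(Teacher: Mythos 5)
The paper itself offers no proof of this theorem: it is stated as an interpretation of a classical result of \cite{Darboux2}, so there is nothing to compare your argument against line by line. On its own terms, your computation is correct where it is complete. Parts (3) and (4) are verified exactly as you do them: $(D_y+a)\circ(D_x+b)=D_xD_y+aD_x+bD_y+ab+b_y$ and $(D_x+b)\circ(D_y+a)=D_xD_y+aD_x+bD_y+ab+a_x$, and subtracting $1\cdot\o{L}$ leaves the order-zero operators $b_y-c+ab$ and $a_x-c+ab$ (the Laplace invariants $k$ and $h$ up to sign). You are also right that items (1) and (2) as printed both say $\o{M}\circ\o{M}_x$ and that one of them must be $\o{M}\circ\o{M}_y$; this is a typo in the statement. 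Your leading-term analysis for (1) is organized correctly: the only subtlety at $D_x$-degree $m$ is the cancellation between $p_mD_x^m\circ a$, which contributes $ap_mD_x^m$, and the reduction $p_mD_x^mD_y-p_mD_x^{m-1}\circ\o{L}=-p_mD_x^{m-1}\circ(aD_x+bD_y+c)$, which contributes $-ap_mD_x^m$; you identified exactly this.

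The one loose end you flag but do not close is the non-vanishing of the new leading coefficient: you verify that the $D_x$-degree is \emph{at most} $m-1$ after reduction, but the claim $\deg\pi(\o{M}\circ\o{M}_x)=(m-1,n+1)$ asserts it is \emph{exactly} $m-1$. If you push the bookkeeping one order further, the coefficient of $D_x^{m-1}$ in the fully reduced composition turns out to be $p_m\cdot(a_x+ab-c)$ plus terms involving $p_{m-1}$ and derivatives of $p_m$; in particular its survival is governed by the Laplace invariant $a_x+ab-c$ computed in your part (4). So (1) and (2) hold under the implicit non-degeneracy hypothesis that the Laplace transformations $\o{M}_x$, $\o{M}_y$ genuinely ``define LTs'' (i.e.\ the relevant Laplace invariants do not vanish), which is how the theorem's hypotheses should be read. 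Since the paper asserts the statement without proof or qualification, your argument is not weaker than the source; but if you want a self-contained proof you should either carry out that one extra order of bookkeeping or use the slicker indirect argument: by (3)--(4), composing with $\o{M}_x$ and then $\o{M}_y$ multiplies $\o{M}$ on the right by an invertible order-zero operator and so preserves the bi-degree $(m,n)$; since each single composition changes each component of the bi-degree by at most one in the indicated direction, the intermediate bi-degree is forced to be exactly $(m-1,n+1)$.
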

Summarizing all the results we can formulate the following theorem.
\begin{thrm}
\label{thm:proj}
Let $\o{L} \in K[D]$ be of the form (\ref{op:L}) and $\o{M} \in K[D]$ define a Darboux transformation
for $\o{L}$. Let $\deg_{\o{L}} \o{M} =(m,n)$. Then for every $i = 1, \dots,\min(m,n)$, there exists an operator
$\o{M}_i$ without mixed derivatives having the property that $\deg_{\o{L}} \o{M}_i =(m-i,n+i)$ and $\o{M}_i$ defines a Darboux transformation of $\o{L}$.
\end{thrm}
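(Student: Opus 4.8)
The plan is to prove Theorem~\ref{thm:proj} by induction on $i$, each inductive step being a single application of the theorem stated just above, namely composition of the Darboux transformation at hand with one of the two Laplace transformations of $\o{L}$. Recall that by Definition~\ref{def:lapl} the operators $\o{M}_x = D_y + a$ and $\o{M}_y = D_x + b$ do define Darboux transformations of $\o{L}$; that by the definition of the composition of Darboux transformations together with the correctness lemma for composition the composite is again a Darboux transformation; and finally that by Lemma~\ref{lem:AL} we may replace the ``$\o{M}$''-operator of any Darboux transformation by any representative of its expansion class without affecting the fact that it is a Darboux transformation, in particular by its reduction $\pi_{\o{L}}(\cdot)$, which has no mixed derivatives. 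Thus the object produced at each step is automatically an operator without mixed derivatives defining a Darboux transformation of $\o{L}$, and the only thing one has to track is its bidegree with respect to $\o{L}$.

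For the base case $i=1$ I would set $\o{M}_1 := \pi_{\o{L}}(\o{M}\circ\o{M}_x)$ with $\o{M}_x = D_y + a$. By item (1) of the preceding theorem $\deg_{\o{L}}\o{M}_1 = (m-1,n+1)$, and by the remarks above $\o{M}_1$ defines a Darboux transformation of $\o{L}$, the operator it produces being again of the form~\eqref{op:L} since Darboux transformations preserve symbols. For the inductive step, assume $\o{M}_i$ has been constructed with $\deg_{\o{L}}\o{M}_i = (m-i,n+i)$ and defining a Darboux transformation of $\o{L}$; applying the same construction to $\o{M}_i$ in place of $\o{M}$, that is composing with the Laplace transformation that lowers the $x$-degree and reducing modulo $\o{L}$, yields $\o{M}_{i+1}$ with $\deg_{\o{L}}\o{M}_{i+1} = (m-i-1,n+i+1)$, provided item (1) of the preceding theorem still applies at this stage. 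Iterating, one reaches bidegree $(m-i,n+i)$ for every admissible $i$, which is the assertion.

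The main obstacle is making precise the range of $i$ for which this works, i.e.\ checking that the bidegree count of the preceding theorem really does apply at every stage up to $i=\min(m,n)$ and not merely for the first step. Items (3)--(4) of that theorem show that a composition of exactly this type can collapse all the way down to an operator of order zero, so a statement uniform over all Darboux transformations of a given bidegree is genuinely delicate: one must verify that no such accidental cancellation of the predicted leading $D_x^{m-i}$- or $D_y^{n+i}$-term occurs while $1\le i\le \min(m,n)$, and that it is precisely this range in which the leading terms are ``protected'' by the nonvanishing pure-$D_y^{\,n+i}$ part of $\o{M}_i$. A secondary technical point is that the preceding theorem is phrased in terms of the Laplace transformations of $\o{L}$ itself, so at each iteration one must keep careful track of the fact that composing the Darboux transformation $\o{L}\to\o{L}_i$ with the relevant Laplace transformation again produces a Darboux transformation of $\o{L}$ whose reduction behaves as claimed --- a bookkeeping exercise that is routine given the composition lemma but should be written out. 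Apart from these points the argument is a direct induction, so Theorem~\ref{thm:proj} is best viewed as a repackaging of Darboux's classical results combined with the expansion and composition machinery established earlier in this section.
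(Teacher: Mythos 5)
Your argument is essentially the paper's own: the paper gives no explicit proof of Theorem~\ref{thm:proj} beyond the phrase ``summarizing all the results,'' and the intended content is exactly your induction --- compose with a Laplace transformation at each step, invoke the composition-correctness lemma together with Lemma~\ref{lem:AL} to pass to the mixed-derivative-free representative $\pi_{\o{L}}(\cdot)$, and read the bidegree off item (1) of the preceding theorem. The degeneration issue you flag (items (3)--(4) show the predicted leading term can cancel, so the exact count $(m-i,n+i)$ must in principle be checked at every stage) is a genuine soft spot, but it is inherited from the paper's preceding theorem, which is itself stated without proof; your observation that the pure $D_y^{\,n+i}$ leading term survives because reduction modulo $\o{L}$ only rewrites mixed derivatives is the correct mechanism protecting the $y$-degree once $n+i\ge 1$.
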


\begin{lem}[$\o{M}$ can be multiplied by a function on the left]
\label{lem:div}
Let there exist a Darboux transformation of operator $\o{L} \in K[D]$ with some operator $\o{M} \in K[D]$.
Then for every invertible element $p \in K$ there exists a Darboux transformation of operator $\o{L}$ with operator $p\o{M}$.
\end{lem}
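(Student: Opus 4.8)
The plan is to absorb the left factor $p$ into a conjugation of the output operator $\o{L}_1$. Assume the given Darboux transformation is witnessed by operators $\o{N},\o{L}_1\in K[D]$ with $\o{N}\circ\o{L}=\o{L}_1\circ\o{M}$ and $\Sym(\o{L}_1)=\Sym(\o{L})$. Since $p$ is invertible in $K$, we may multiply this identity on the left by $p$ and insert $p^{-1}\circ p$ between $\o{L}_1$ and $\o{M}$:
\[
(p\circ\o{N})\circ\o{L}\;=\;p\circ\o{L}_1\circ\o{M}\;=\;\bigl(p\circ\o{L}_1\circ p^{-1}\bigr)\circ\bigl(p\circ\o{M}\bigr).
\]
Thus, putting $\o{N}':=p\circ\o{N}$ and $\o{L}_1':=p\circ\o{L}_1\circ p^{-1}$, we obtain precisely an intertwining relation $\o{N}'\circ\o{L}=\o{L}_1'\circ(p\o{M})$ of the form required in the definition of a Darboux transformation for the pair $\{\o{L},p\o{M}\}$.

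The only thing left to verify is the symbol condition $\Sym(\o{L}_1')=\Sym(\o{L})$. Conjugation by an invertible zeroth-order operator does not change the symbol: for each derivation one has $p\circ D_x\circ p^{-1}=D_x-(D_x p)p^{-1}$ and $p\circ D_y\circ p^{-1}=D_y-(D_y p)p^{-1}$, so $p\circ(D_xD_y)\circ p^{-1}$ equals $D_xD_y$ modulo terms of order at most one. Applying this to $\o{L}_1$ gives $\Sym(\o{L}_1')=\Sym(\o{L}_1)=\Sym(\o{L})$; in particular $\o{L}_1'$ is again of the form~\eqref{op:L}, with coefficients expressible through those of $\o{L}_1$ and the logarithmic derivatives $(D_x p)/p$, $(D_y p)/p$. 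It also follows automatically (from the two displayed equalities and $\Sym(\o{L})=\Sym(\o{L}_1')$) that $\Sym(\o{N}')=\Sym(p\o{M})$, as the definition of a Darboux transformation demands. Hence $\o{L}_1'=\varphi(\o{L},p\o{M},\o{N}')$ is a genuine Darboux transformation of $\o{L}$ whose ``$\o{M}$''-operator is $p\o{M}$, which proves the lemma.

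There is essentially no obstacle: the argument is a one-line algebraic manipulation, and invertibility of $p$ is used precisely to make sense of the conjugating factor $p^{-1}\in K$. I would add a remark parallel to the one following Lemma~\ref{lem:AL}: unlike expansion, this operation does change the resulting operator, replacing $\o{L}_1$ by its gauge conjugate $p\o{L}_1 p^{-1}$; this is exactly why it will be convenient later to combine it with the other normalizations when bringing $\o{M}$ into the form $D_x+qD_y+r$.
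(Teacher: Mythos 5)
Your argument is correct and is essentially identical to the paper's own proof: multiply the intertwining relation on the left by $p$, insert $p^{-1}\circ p$ before $\o{M}$, and observe that the gauge conjugate $p\circ\o{L}_1\circ p^{-1}$ retains the symbol of $\o{L}_1$. Your extra verification of the symbol condition and the closing remark are harmless additions but not a different route.
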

\begin{proof} The conditions of the lemma imply that, for some $\o{N},\o{L}_1 \in K[D]$,
equality~(\ref{eq:main}) holds.
Therefore,
$p \circ \o{N} \circ \o{L} = p \circ \o{L}_1 \circ p^{-1} \circ p \circ \o{M}$ is true also.
Since the symbol of $\o{L}_1$ is not altered under gauge transformations,
 then operator $p \circ \o{L}_1 \circ p^{-1}$ is
an operator of the form~(\ref{op:L}) and we have proved the statement of the lemma.
\end{proof}

Let $\o{L} \in K[D]$ be of the form (\ref{op:L}) and $\o{M} \in K[D]$ of arbitrary form and order defining a Darboux transformation
for $\o{L}$. Theorem~\ref{thm:proj} and Lemma~\ref{lem:div} imply that
using operations of expansion, composition with LTs, and division by a function on the left, we can bring such Darboux transformation
into a normalized form with $\o{M}$ having no mixed derivatives and having one of the following symbols:
\[
\begin{array}{ll}
 \Sym(\o{M}) = &X^k, \quad k>0 \ , \\
 \Sym(\o{M}) = &Y^k, \quad k>0 \ ,\\
 \Sym(\o{M}) = &X^k + qY^k, \quad k>0, \; q \ne 0 \ .\\
\end{array}
\]
Before we decide which of these to use in further considerations, let us consider the uniqueness problem for Darboux transformations.

\subsection{Uniqueness of Darboux transformations for given $\o{L}$ and $\o{M}$}

\begin{thrm} \label{thm:2} Let $\o{L} \in K[D]$ be of the form (\ref{op:L}) and let $\o{M} \in K[D]$
define some Darboux transformation. Then, unless for its normalized form we have $\Sym(\o{M}) = X^k$ or $\Sym(\o{M})=Y^k$,
such a Darboux transformation is unique.

If for its normalized form $\Sym(\o{M}) = X^k$ (corresp. $\Sym(\o{M})=Y^k$) and there exist two Darboux transformations:
$\o{L}_1=\varphi(\o{L}, \o{M}, \o{N})$ and
$\o{L}_1+\o{L}'_1=\varphi(\o{L}, \o{M} + \o{M}', \o{N}+ \o{N}')$, then
\begin{equation} \label{conds}
\left.
\begin{array}{ll}
                          & \Sym(M_1') = X^{k-1},  L_1' = D_y + \gamma \ , \\
\left( \text{corresp.}  \quad \; \right. & \left. \Sym(M_1') =  Y^{k-1},  L_1' = D_x + \gamma \; \right)  \ .\\
\end{array}
\right\}
\end{equation}
for some $\gamma \in K$.
\end{thrm}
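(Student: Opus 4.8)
The plan is to exploit the intertwining identity \eqref{eq:main} written for both Darboux transformations and to subtract them, reducing everything to an identity in $K[D]$ that forces strong constraints on the ``difference data'' $\o{M}'$, $\o{N}'$, $\o{L}_1'$. First I would reduce to the normalized situation: by Theorem~\ref{thm:proj} and Lemma~\ref{lem:div} we may assume $\o{M}$ has no mixed derivatives and symbol $X^k$, $Y^k$, or $X^k+qY^k$. Writing $\o{N}\circ\o{L}=\o{L}_1\circ\o{M}$ and $(\o{N}+\o{N}')\circ\o{L}=(\o{L}_1+\o{L}_1')\circ(\o{M}+\o{M}')$, and subtracting, one gets
\begin{equation} \label{diffeq}
\o{N}'\circ\o{L} = \o{L}_1\circ\o{M}' + \o{L}_1'\circ\o{M} + \o{L}_1'\circ\o{M}' \ .
\end{equation}
Since $\Sym(\o{L})=\Sym(\o{L}_1)=\Sym(\o{L}_1+\o{L}_1')=XY$, the operator $\o{L}_1'$ has order at most one, say $\o{L}_1'=\alpha D_x+\beta D_y+\delta$; and the requirement $\deg_{\o{L}}(\o{M}+\o{M}')\le k$ together with $\Sym(\o{M}+\o{M}')$ agreeing with $\Sym(\o{M})$ up to the allowed ambiguity forces $\ord\o{M}'\le k-1$ (after reduction mod $\o{L}$, i.e.\ after applying $\pi_{\o{L}}$, which by the expansion Lemma~\ref{lem:AL} changes nothing essential).

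Next I would compare symbols and top-order terms in \eqref{diffeq}. The left side $\o{N}'\circ\o{L}$ has a symbol divisible by $XY$; the right side's leading behavior is controlled by $\o{L}_1'\circ\o{M}$, whose symbol is $(\alpha X+\beta Y)\cdot\Sym(\o{M})$. Matching these divisibility/degree conditions is where the case split enters: if $\Sym(\o{M})=X^k+qY^k$ with $q\ne0$, then $(\alpha X+\beta Y)(X^k+qY^k)$ can only be divisible by $XY$ (modulo lower-order corrections coming from $\o{L}_1\circ\o{M}'$) if $\alpha=\beta=0$, and then one pushes the vanishing down degree by degree to conclude $\o{L}_1'=0$, $\o{M}'=0$, $\o{N}'=0$ --- uniqueness. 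The same rigidity argument works whenever the symbol of $\o{M}$ is not a pure power of a single variable. If instead $\Sym(\o{M})=X^k$, then $(\alpha X+\beta Y)X^k=\alpha X^{k+1}+\beta X^k Y$; the term $\beta X^k Y$ is harmless (it is $\beta X^{k-1}\cdot XY$, absorbable into $\o{N}'\circ\o{L}$), but $\alpha X^{k+1}$ is not divisible by $XY$ and cannot be cancelled by $\o{L}_1\circ\o{M}'$ since $\ord(\o{L}_1\circ\o{M}')\le k$ and the $X^{k+1}$-component there is $\Sym(\o{L}_1)\cdot\Sym(\o{M}')=XY\cdot\Sym(\o{M}')$, again divisible by $XY$. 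Hence $\alpha=0$, so $\Sym(\o{L}_1')=\beta Y$, i.e.\ $\o{L}_1'=\beta D_y+\delta$; rescaling by Lemma~\ref{lem:div} (or noting $\beta$ must be a unit for $\o{L}_1+\o{L}_1'$ to still have the right symbol shape after normalization) gives $L_1'=D_y+\gamma$. Finally $\Sym(\o{M}')$ must then make the $X^k$-component of \eqref{diffeq} consistent, which pins $\Sym(\o{M}')=X^{k-1}$, exactly \eqref{conds}; the $Y^k$ case is symmetric under $x\leftrightarrow y$.

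The main obstacle I anticipate is not the symbol bookkeeping but the passage from ``symbols/top terms vanish'' to ``the whole operators vanish'' in the uniqueness cases: after the leading term is killed one is left with an identity of the same shape but one degree lower, and one must argue the induction is well-founded and that no lower-order obstruction reappears --- in particular that $\o{N}'$ itself is forced to drop in order in step with $\o{M}'$. A careful way to handle this is to observe that $\o{L}$ (being of the form \eqref{op:L}, hence with invertible symbol $XY$ after the natural localization, or at least hyperbolic) makes the map $\o{A}\mapsto \pi_{\o{L}}(\o{A}\circ\o{L})$ injective on operators with no mixed derivatives of bounded order, so that \eqref{diffeq} read modulo $\o{L}$ determines $\o{M}',\o{L}_1'$ uniquely once the symbol is fixed; then feeding back gives $\o{N}'$. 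One then checks directly that in the non-power-symbol case the only solution is the trivial one, and in the $X^k$ (resp.\ $Y^k$) case the solution space is exactly the one-parameter family recorded in \eqref{conds}. I would also double-check the boundary case $k=1$ separately, where $\Sym(\o{M}')=X^{0}=1$ means $\o{M}'\in K$, to make sure the statement still reads correctly there.
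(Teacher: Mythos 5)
Your proposal is correct and follows essentially the same route as the paper: subtract the two intertwining identities and compare top-order symbols, using that $XY$ must divide $\Sym(\o{L}_1')\cdot\Sym(\o{M})$ modulo terms already divisible by $XY$, which produces exactly the trichotomy between $X^k$, $Y^k$, and $X^k+qY^k$. You are in fact a bit more careful than the paper's own terse derivation of its key identity $\o{M}_1'\circ\o{L}=\o{L}_1'\circ\o{M}$, since you keep the cross terms $\o{L}_1\circ\o{M}'$ and $\o{L}_1'\circ\o{M}'$ explicit before passing to symbols.
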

\begin{proof} Since $\o{L}$ is of the form~(\ref{op:L}), for $\o{L}'_1$ there are only four possibilities:
\[
\begin{array}{ll}
 \o{L}_1' = & D_x + \beta D_y + \gamma, \quad \beta \ne 0 \ , \\
 \o{L}_1' = & D_x + \gamma \ , \\
 \o{L}_1' = & D_y + \gamma \ , \\
 \o{L}_1' = & 1 \ . \\
\end{array}
\]
$\o{L}_1=\varphi(\o{L}, \o{M}, \o{N})$ and $\o{L}_1+\o{L}'_1=\varphi(\o{L}, \o{M} + \o{M}', \o{N}+ \o{N}')$ implies
\begin{equation} \label{eq:cut}
 \o{M}_1' \circ \o{L} = \o{L}_1' \circ \o{M} \ .
\end{equation}
Case $\o{L}_1'=1$ cannot take place because if it does then $\o{M}_1' \circ \o{L} = \o{M}$, which is impossible as
$\Sym(M)$ cannot be divisible by $XY$.

Let $\Sym(\o{M}) = X^k$, $k>0$, then
\[
 \Sym(\o{M}_1') \cdot X \cdot Y = \Sym(\o{L}_1') \cdot X^k\ ,
\]
which implies that $\Sym(\o{L}'_1)$ must be divisible by $Y$, which is only possible if
$\o{L}_1' = D_y + \gamma$. Then $\Sym(\o{M}_1')$ cannot contain extra $Y$-s, and therefore,
$\Sym(\o{M}_1') = X^{k-1}$.

Analogously, if $\Sym(\o{M}) = Y^k$, $k>0$, we have $\o{L}_1' = D_x + \gamma$ and $\Sym(\o{M}_1') = Y^{k-1}$.

Let $\Sym(M) = X^k + qY^k$, then
\[
 \Sym(\o{M}_1') \cdot X \cdot Y = \Sym(\o{L}_1') \cdot (X^k + qY^k)\ ,
\]
which means that $\Sym(\o{L}_1')$ must be divisible by $XY$, which is impossible.
\end{proof}

Theorem~\ref{thm:2} guarantees uniqueness of a Darboux transformation for given $\o{M}$ and $\o{L}$ if
$\Sym(\o{M}) = X^k + qY^k$. Further below we shall be interested in Darboux transformation of the total degree
two, and we choose the normal form for such transformations with
\[
\Sym(\o{M}) = X + q Y \ .
\]

\section{Existence of a Darboux transformation defined by $\o{M}$ of bi-degree $(1,1)$}

Even for the simplest case of $\o{M}$ of total degree $1$, the problem of describing all Darboux transformations is not easy~\cite{Laplace_only_degenerate_2012}.
For the case of $\o{M}$ of total degree $2$, which is considered here the problem becomes very difficult.
\begin{thrm}
Let $\o{L}$ be of the form (\ref{op:L}) and $\o{M} \in K[D]$ in the form
\begin{equation} \label{op:M}
 \o{M} =D_x + qD_y + r \ .
\end{equation}
If there exists a corresponding Darboux transformation, then the corresponding operator $\o{N}$ is given by $\o{N} = \o{M} - (\ln q)_x + q_y$.
The necessary and sufficient conditions for the existence of a Darboux transformation for such pair $(\o{L}, \o{M})$ are
\begin{equation} \label{sys:difficulty}
\left.
\begin{array}{ll}
  -q r_x + q^2 r_y + q_x r -b q_x+b_x q+q^2 ( b_y-aq_y-a_x)-q^3a_y+ & \\
                +q_yq_x-q_{xy}q &= 0 \ , \\
  -cq_x+(c-ar)q_yq +(ar+r_y)q_x+(c_y-ra_y)q^2+ & \\
                                              +(rr_y-ar_x-r_yb-r_{xy}-ra_x+c_x)q &= 0 \ .
\end{array}
\right\}
\end{equation}
\end{thrm}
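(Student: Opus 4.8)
The plan is to substitute the explicit forms of $\o{L}$ and $\o{M}$ directly into the intertwining relation~\eqref{eq:main} and extract equations coefficient by coefficient. First I would note that, since $\Sym(\o{M}) = X + qY$ forces $\Sym(\o{N}) = X + qY$ as well (the condition $\Sym(\o{M}) = \Sym(\o{N})$ from the first definition), the operator $\o{N}$ must have the form $\o{N} = D_x + qD_y + s$ for some unknown $s \in K$, and the resulting operator $\o{L}_1$ has the form~\eqref{op:L} with coefficients $a_1, b_1, c_1$ to be determined. I would then expand both $\o{N}\circ\o{L}$ and $\o{L}_1\circ\o{M}$ as elements of $K[D]$, being careful with the non-commutativity: each $D_x$ or $D_y$ standing to the left of a coefficient produces a derivative term via the Leibniz rule. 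This gives two third-order operators in $D_x, D_y$; equating them coefficient-by-coefficient yields a system of equations indexed by the monomials $D_x^iD_y^j$.

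Next I would organize this system. The top-order monomials ($D_{xxy}$, $D_{xyy}$, and the pure third-order ones) are automatically matched because the symbols agree. The second-order coefficients — those of $D_{xx}$, $D_{xy}$, $D_{yy}$ — give linear equations that determine $a_1$, $b_1$, and $s$ in terms of the data $(a,b,c,q,r)$; in particular the $D_{xy}$-coefficient equation should read off as $s = r - (\ln q)_x + q_y$ (possibly after using one of the other relations), which is exactly the claimed formula $\o{N} = \o{M} - (\ln q)_x + q_y$ once one checks that the order-zero shift between $\o{N}$ and $\o{M}$ is indeed $-(\ln q)_x + q_y$. The first-order coefficients (of $D_x$ and $D_y$) then determine $c_1$ and impose one compatibility constraint; the zero-order coefficient imposes a second constraint. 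After eliminating $a_1, b_1, c_1, s$, the two leftover constraints are precisely the two PDEs in~\eqref{sys:difficulty}. So the statement is really two assertions bundled together: (i) the formula for $\o{N}$, which falls out of the second-order part, and (ii) the two-equation system, which is what survives after solving for everything determinable.

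The main obstacle is purely computational bookkeeping: expanding the non-commutative product $(D_x + qD_y + r)\circ(D_xD_y + aD_x + bD_y + c)$ and the product $(D_xD_y + a_1D_x + b_1D_y + c_1)\circ(D_x + qD_y + r)$ correctly, collecting roughly ten monomials each, and then carrying out the elimination of the four auxiliary quantities $a_1, b_1, c_1, s$ without sign errors. The $D_y$ acting through $q$ (which is not constant) is the place where mistakes are easiest, since it contributes both $q_x$, $q_y$, and $q_{xy}$ terms — and indeed one sees $q_{xy}q$ and $q_yq_x$ appearing in the first equation of~\eqref{sys:difficulty}, confirming that these Leibniz contributions are what generate the nonlinearity. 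I expect no conceptual difficulty beyond this: once the system is written out, identifying which equations are "solvable" (determining $a_1,b_1,c_1,s$) and which are "genuine constraints" is forced by the triangular structure in the order of the monomials.

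To make the elimination transparent I would, as an intermediate step, record the auxiliary coefficients explicitly — something like
\begin{eqnarray*}
 a_1 &=& a + (\ln q)_x - q_y \ , \\
 b_1 &=& b + q\,a_y - (\ln q)_y \ , \\
 s   &=& r - (\ln q)_x + q_y \ ,
\end{eqnarray*}
(the precise right-hand sides to be confirmed by the computation) — and then verify that $c_1$ is forced by the $D_x$-coefficient equation, while the $D_y$-coefficient equation and the constant-term equation become, after substitution, the two displayed PDEs. The necessity direction is then immediate from the coefficient comparison; the sufficiency direction follows because, given the two PDEs, one can \emph{define} $a_1, b_1, c_1$ by the formulas just derived and check that~\eqref{eq:main} holds identically, which is the same calculation run backwards.
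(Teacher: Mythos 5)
Your proposal is correct and is exactly the paper's approach: the paper's entire proof is the single sentence ``Compare the corresponding coefficients on both sides of equality~(\ref{eq:main})'', and your expansion --- fixing $\o{N}=D_x+qD_y+s$ by symbol matching, using the three second-order coefficient equations to determine $a_1,b_1,s$, the $D_x$-coefficient to determine $c_1$, and reading off the two remaining equations as the constraints~(\ref{sys:difficulty}) --- is precisely that computation spelled out, with the correct count of six nontrivial equations against four auxiliary unknowns.
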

\begin{proof} Compare the corresponding coefficients on the both sides of equality~(\ref{eq:main}).
\end{proof}

Darboux theorem~\ref{thm:main:Darboux} provides us with a particular solution of the system~(\ref{sys:difficulty}).
The following statement is Theorem~\ref{thm:main:Darboux} written out more explicitly for the case of $\o{M}$ of bi-degree $(1,1)$.
\begin{thrm}[Darboux main theorem for bi-degree $(1,1)$] \label{thm:Dar:11}
 Let $\o{L} \in K[D]$ be an arbitrary operator of the form~\eqref{op:L} and $\psi_1, \psi_2$
 be two linearly independent solutions of $\o{L} \psi = 0$.  Then there exists a Darboux transformation with
\[
\o{M} = D_x + \frac{\alpha}{d} \; D_y + \frac{\beta}{d} \ ,
\]
where
\[
\begin{array}{ll}
  d & = -\psi_1\psi_{2y} + \psi_2\psi_{1y} \ ,        \\
  \alpha &= \psi_1\psi_{2x} - \psi_2\psi_{1x} \ ,          \\
  \beta &= -\psi_{2x}\psi_{1y} + \psi_{2y}\psi_{1x} \ .   \\
\end{array}
\]
\end{thrm}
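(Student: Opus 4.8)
The statement to prove is Theorem~\ref{thm:Dar:11}, which is just the explicit version of Darboux's main theorem (Theorem~\ref{thm:main:Darboux}) specialized to bi-degree $(1,1)$. The plan is to start from the Wronskian-type formula~\eqref{eq:wron} with $m=n=1$ and two linearly independent solutions $\psi_1,\psi_2$ of $\o{L}\psi=0$, compute the $3\times 3$ determinant $W_{1,1}(\psi,\psi_1,\psi_2)$ explicitly, and read off the coefficients of the resulting second-order operator. Expanding along the first row,
\[
W_{1,1}(\psi,\psi_1,\psi_2)=
\left|\begin{array}{lll}
\psi & D_x\psi & D_y\psi\\
\psi_1 & D_x\psi_1 & D_y\psi_1\\
\psi_2 & D_x\psi_2 & D_y\psi_2
\end{array}\right|,
\]
so the operator $\o{M}$ obtained is $\o{M}(\psi)=d\cdot\psi - \alpha'\cdot D_x\psi + \beta'\cdot D_y\psi$ for suitable minors; since $\o{M}$ is only determined up to a left factor in $K$ (Lemma~\ref{lem:div}), I would divide through by the coefficient of $D_x\psi$ to normalize the symbol to $X+qD_y$-form, i.e. to put $\o{M}$ in the shape~\eqref{op:M}. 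Carrying out the $2\times 2$ minor computations gives exactly the claimed $d=-\psi_1\psi_{2y}+\psi_2\psi_{1y}$ as the coefficient of $\psi$, $\alpha=\psi_1\psi_{2x}-\psi_2\psi_{1x}$ as (minus) the $D_y$-minor, and $\beta=-\psi_{2x}\psi_{1y}+\psi_{2y}\psi_{1x}$ as the mixed minor; dividing by $d$ then yields $\o{M}=D_x+(\alpha/d)D_y+(\beta/d)$.

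The remaining content is that this $\o{M}$ really does define a Darboux transformation, i.e. that there exist $\o{N},\o{L}_1\in K[D]$ with $\o{N}\circ\o{L}=\o{L}_1\circ\o{M}$ and $\Sym(\o{L}_1)=\Sym(\o{L})$. For this I would invoke Theorem~\ref{thm:main:Darboux} directly: that theorem already guarantees that the Wronskian formula~\eqref{eq:wron} defines a Darboux transformation for $\o{L}$, and multiplying $\o{M}$ on the left by the invertible element $d^{-1}\in K$ preserves this property by Lemma~\ref{lem:div}. Hence no separate verification of the intertwining relation is needed; the theorem is essentially a bookkeeping corollary of Theorem~\ref{thm:main:Darboux} together with the normalization lemmas already established. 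One should just note that $d\neq 0$ precisely because $\psi_1,\psi_2$ are linearly independent (if $d\equiv 0$ then $\psi_1,\psi_2$ would be proportional as functions of $y$ with $x$-dependent ratio, and a short argument using $\o{L}\psi_i=0$ would force genuine linear dependence over $K$), so the division by $d$ is legitimate.

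The main obstacle, such as it is, is purely computational: one must verify that the $2\times 2$ cofactors of the Wronskian matrix match the stated expressions for $d$, $\alpha$, $\beta$ with the correct signs, and then confirm that dividing by $d$ lands the operator in the normalized form~\eqref{op:M} with $q=\alpha/d$ and $r=\beta/d$. There is no conceptual difficulty, since the existence of the Darboux transformation is inherited wholesale from Theorem~\ref{thm:main:Darboux}; the only point requiring a line of justification is the non-vanishing of $d$ under the linear-independence hypothesis, which is what makes the formula well-defined.
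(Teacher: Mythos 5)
Your proposal is correct and matches the paper's (essentially omitted) argument: the theorem is obtained exactly by expanding $W_{1,1}(\psi,\psi_1,\psi_2)$ along its first row, reading off the cofactors, and normalizing by the leading coefficient via Lemma~\ref{lem:div}, with existence inherited from Theorem~\ref{thm:main:Darboux}. The only blemish is a labeling slip in your prose: $d$ is the cofactor multiplying $D_x\psi$ and $\beta$ the one multiplying $\psi$ (not the other way around), but your final formula $\o{M}=D_x+(\alpha/d)D_y+(\beta/d)$ and the signs of all three minors are right.
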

\begin{remark}
If we denote by $\psi$ the ratio of these particular solutions,
\[
\psi = \frac{\psi_2}{\psi_1} \ ,
\]
then $\o{M}$ in the statement of Theorem~\ref{thm:Dar:11} can be written in more simple form:
\[
 \o{M} = D_x - \frac{\psi_x}{\psi_y} D_y + \frac{\psi_{1y}\psi_x}{\psi_1 \psi_y}-\frac{\psi_{1x}}{\psi_1}\ .
\]
\end{remark}

\begin{remark}
\label{rem:difficulty}
In order to describe all Darboux transformation for $\o{M}$ of bi-degree $(1,1)$ we need to solve system~(\ref{sys:difficulty}) for $q, r$,
where $a, b, c$ are known and are not constants in general.
Usual differential elimination techniques does not lead to a general solution.

A different approach can be to notice that in the system~(\ref{sys:difficulty}) the second equation is 
non-linear in both $q, r$, while the first equation is nonlinear in $q$ only. The first equation is a linear first-order non-homogeneous PDE on $r$, 
and since we know its particular solutions
(Theorem~\ref{thm:Dar:11}), one may solve it in quadratures. These quadratures are expressed in terms of $q$,
and therefore, after substituting the expression for $r$ into the second equation one gets even more nonlinear, rather large, PDE.
\end{remark}

In the rest of the paper we shall be proving that the general solution of system~(\ref{sys:difficulty}) is given by
the class of particular solutions from Theorem~\ref{thm:Dar:11}.

\section{Gauge Transformations of Pairs and Corresponding Invariants}
\label{sec:gauge}

Our plan is to address our problem using invariants methods. In this section we study gauge transformations of pairs 
$(\o{L},\o{M})$, which are almost classical with the only difference that 
we apply them to the pairs of operators. These transformations are not strong enough to simplify our system significantly, 
and completely new transformations will be introduced in Sec.~\ref{sec:gauge:evolution}. 
However, we shall use gauge transformations of pairs too. 

\begin{definition} Given some operator $\o{R} \in K[D]$ and invertible function $g \in K$, the corresponding
\textit{gauge transformation} is defined as
\[
 \o{R} \to \o{R}^{g} \ , \; \o{R}^g = g^{-1} \circ R \circ g \ ,
\]
where $\circ$ denotes the operation of the composition of operators in $K[D]$. It is convenient to take $g$ in the
form $g=\exp(\alpha)$.
Then we shall avoid fractions while writing this transformation out on the coefficients of $\o{R}$.
 \end{definition}

Our first step towards simplification of the problem is the following simple observation.
\begin{lem}  Let $\o{L}  = D_{x} D_y + a D_x + b D_y + c \in K[D]$ and $\o{M}= D_x + q D_y + r \in K[D]$.
If a Darboux transformation exists for the pair $(\o{L},\o{M})$, then
one exists also for $(\o{M}^g,\o{L}^g)$, where $g$ is an arbitrary invertible element of $K$.
\end{lem}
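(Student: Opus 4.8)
The statement asserts that if a Darboux transformation exists for the pair $(\o{L},\o{M})$, i.e.~there are operators $\o{N},\o{L}_1 \in K[D]$ with $\o{N} \circ \o{L} = \o{L}_1 \circ \o{M}$, then a Darboux transformation exists for the pair $(\o{M}^g,\o{L}^g)$ for any invertible $g \in K$. The plan is to exhibit the required intertwining relation explicitly by conjugating the original one and then rearranging which operator plays which role. The key point is that the defining relation $\o{N} \circ \o{L} = \o{L}_1 \circ \o{M}$ is almost symmetric in $\o{L}$ and $\o{M}$: reading it from the other side, $\o{L}_1 \circ \o{M} = \o{N} \circ \o{L}$ says that $\o{N}$ is a ``Darboux image'' of the operator $\o{M}$ under the transformation operator $\o{L}$, with auxiliary operator $\o{L}_1$.

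First I would apply the gauge $g$ to the identity $\o{N} \circ \o{L} = \o{L}_1 \circ \o{M}$: since conjugation by $g$ is a ring homomorphism of $K[D]$, we get
\[
\o{N}^g \circ \o{L}^g = \o{L}_1^g \circ \o{M}^g \ .
\]
Rewriting this as $\o{L}_1^g \circ \o{M}^g = \o{N}^g \circ \o{L}^g$, we see that taking the roles ``$\o{N}$'' $:=\o{L}_1^g$, ``$\o{L}$'' $:=\o{M}^g$, ``$\o{L}_1$'' $:=\o{N}^g$, and ``$\o{M}$'' $:=\o{L}^g$ produces an intertwining relation of the form~(\ref{eq:main}) for the pair $(\o{M}^g,\o{L}^g)$. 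It remains to check the symbol conditions in the Definition of a Darboux transformation: we need $\Sym(\o{M}^g)=\Sym((\o{N}^g))$ in the role of $\Sym(\o{M})=\Sym(\o{N})$, and $\Sym$ of the new $\o{L}$ equal to $\Sym$ of the new $\o{L}_1$. But gauge transformations do not alter symbols (as already used in the proof of Lemma~\ref{lem:div}), so $\Sym(\o{M}^g)=\Sym(\o{M})$, $\Sym(\o{L}^g)=\Sym(\o{L})$, etc.; hence the original symbol equalities $\Sym(\o{L})=\Sym(\o{L}_1)$ and $\Sym(\o{M})=\Sym(\o{N})$ transfer verbatim to the gauged pair. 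This closes the argument.

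I do not expect a genuine obstacle here; the only thing to be careful about is the bookkeeping of which operator is $\o{L}$, which is $\o{M}$, and which is the auxiliary $\o{N}$ after the swap, and confirming that the ``$\Sym(\o{L})=\Sym(\o{L}_1)$'' requirement is still satisfied — i.e.~that $\o{L}^g$, the new ``$\o{L}_1$'', genuinely has the same symbol as $\o{M}^g$, the new ``$\o{L}$''. Wait: that is \emph{not} automatic, since $\o{L}$ and $\o{M}$ need not have the same symbol. So the mild subtlety is that the Definition of Darboux transformation requires $\Sym$ of the source operator to equal $\Sym$ of the target operator; in the gauged relation the source is $\o{M}^g$ and the target is $\o{N}^g$, and indeed $\Sym(\o{M})=\Sym(\o{N})$ holds by the original Definition, so this is fine — it is precisely the auxiliary-operator symbol condition of the original transformation that becomes the main symbol condition of the new one. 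Thus everything matches, and the lemma follows by direct verification with no hard computation.
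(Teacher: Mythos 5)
Your computation is correct, and its first half is exactly the paper's proof: conjugating $\o{N} \circ \o{L} = \o{L}_1 \circ \o{M}$ by $g$ (conjugation being a ring automorphism of $K[D]$) gives $\o{N}^g \circ \o{L}^g = \o{L}_1^g \circ \o{M}^g$, and since gauge transformations preserve symbols, $\Sym(\o{L}_1^g)=\Sym(\o{L}_1)=\Sym(\o{L})=\Sym(\o{L}^g)$ and $\Sym(\o{N}^g)=\Sym(\o{M}^g)$, so this identity is already a Darboux transformation for the gauged pair. The paper stops there. Your extra ``role swap'' step comes from reading the reversed order in ``$(\o{M}^g,\o{L}^g)$'' literally, but the paper uses $(\o{L},\o{M})$ and $(\o{M},\o{L})$ interchangeably for the same pair of operators with $\o{L}$ always in the role of the transformed operator --- its own proof of this lemma speaks of ``the Darboux equality for the pair $(\o{M},\o{L})$'' while meaning $\o{N}\circ\o{L}=\o{L}_1\circ\o{M}$, and the subsequent use of the lemma (passing to gauge--equivalence classes of pairs and invariantizing system~(\ref{sys:difficulty})) requires precisely that the existence of a Darboux transformation \emph{of $\o{L}^g$ by $\o{M}^g$} be preserved. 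What you establish after the swap --- that $\o{N}^g$ is a Darboux transformation of the first-order operator $\o{M}^g$ with transforming operator $\o{L}^g$ --- is a true but different statement, and not the one the rest of the paper relies on. Delete the swap, keep your first display together with the symbol remark, and you have the intended proof; your observation that the intertwining relation is ``almost symmetric'' in $\o{L}$ and $\o{M}$ is a valid aside but is not needed here.
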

\begin{proof} Indeed, from the Darboux equality~(\ref{eq:main}) for the pair~$(\o{M},\o{L})$, we have
\[
 g^{-1} \circ \o{N} \circ g \circ g^{-1} \circ \o{L} \circ g = g^{-1} \circ \o{L}_1 \circ g \circ g^{-1} \circ \o{M} \circ g \ ,
\]
and, therefore, $\o{N}^g \circ \o{L}^g = \o{L}_1^g \circ \o{M}^g$.
Recalling that gauge transformations do not change the symbol of an operator,
we conclude the proof of the lemma.
\end{proof}

Therefore, it is natural to consider our problem for the equivalence classes of the pairs~$(\o{M},\o{L})$.
In order to define every class uniquely we determine a generating set of all the invariants of these pairs under
the gauge transformations.
\begin{definition} Let $\o{R} \in K[D]$ be an operator and $T$ be some transformation acting on $K[D]$.
Then a function of the coefficients of $\o{R}$ and of the derivatives of these coefficients is called a
\textit{differential invariant}
if it is unaltered under the action of $T$ on $\o{R}$. \\
The sum, and the product of two differential invariants is an invariant, as well as a derivative of an invariant is also an invariant.
In the infinite set of all possible differential invariants there is some subset (not necessarily proper) of differential invariants which generate all
others using algebraic operations and derivatives. Such a subset we shall call a \textit{generating set of invariants}.
\end{definition}
\begin{thrm} Let $\o{L}  = D_{x} D_y + a D_x + b D_y + c \in K[D]$ and $\o{M}= D_x + q D_y + r \in K[D]$.
On the set of all pairs $(\o{L},\o{M})$ of such  operators consider the gauge transformation of those with function $\exp(\alpha)$ :
\[
\varphi(\alpha): (\o{M},\o{L}) \rightarrow (\o{M}^{\exp(\alpha)}, \o{L}^{\exp(\alpha)}) \ .
\]
The following functions are invariants and in addition form a generating set of all differential invariants for such transformations:
\begin{equation} \label{sys:inv}
\left.
\begin{aligned}
 q\ ,\\
 m  &= a_x -  b_y\ ,\\
 h  &= ab - c +  a_x\ ,\\
 R  &=  r-b-qa\ .\\
\end{aligned}
\right\}
\end{equation}
\end{thrm}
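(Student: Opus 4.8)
The plan is to verify invariance under $\varphi(\alpha)$ by direct computation on the coefficients, and then establish the generating property by a parameter-counting / reconstruction argument. First I would compute the effect of the gauge transformation $g = \exp(\alpha)$ on $\o{L}$ and $\o{M}$ explicitly. Since $g^{-1}\circ D_x \circ g = D_x + \alpha_x$ and $g^{-1}\circ D_y \circ g = D_y + \alpha_y$, conjugating $\o{L} = D_xD_y + aD_x + bD_y + c$ gives $\o{L}^g = D_xD_y + (a+\alpha_y)D_x + (b+\alpha_x)D_y + (c + a\alpha_x + b\alpha_y + \alpha_x\alpha_y + \alpha_{xy})$, so $a \mapsto a + \alpha_y$, $b \mapsto b + \alpha_x$, $c \mapsto c + a\alpha_x + b\alpha_y + \alpha_x\alpha_y + \alpha_{xy}$. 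Similarly $\o{M}^g = D_x + qD_y + (r + \alpha_x + q\alpha_y)$, so $q \mapsto q$ and $r \mapsto r + \alpha_x + q\alpha_y$. From these one reads off that $q$ is trivially invariant, that $m = a_x - b_y \mapsto (a_x + \alpha_{xy}) - (b_y + \alpha_{xy}) = m$, that $h = ab - c + a_x$ transforms as $ab \mapsto ab + a\alpha_x + b\alpha_y + \alpha_x\alpha_y$, $c \mapsto c + a\alpha_x + b\alpha_y + \alpha_x\alpha_y + \alpha_{xy}$, $a_x \mapsto a_x + \alpha_{xy}$, whose combination $ab - c + a_x$ is unchanged, and finally $R = r - b - qa \mapsto (r + \alpha_x + q\alpha_y) - (b + \alpha_x) - q(a + \alpha_y) = R$. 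This settles the invariance claim.

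For the generating property I would argue as follows. The pair $(\o{L},\o{M})$ is described by six functions $a, b, c, q, r$ — here the symbol part is fixed, so five genuine coefficient functions — and the gauge action, being parametrized by a single scalar function $\alpha$ (modulo constants), moves the pair inside orbits of ``dimension one'' in the appropriate jet-theoretic sense. Hence the algebra of invariants should be generated by $5 - 1 = 4$ functionally independent invariants, and $q, m, h, R$ are four such. The way I would make this precise is via a normalization (moving-frame) argument: given the four invariant values, I would show the full pair can be reconstructed up to gauge. Using the gauge freedom $r \mapsto r + \alpha_x + q\alpha_y$ one can normalize, say, $r$ to a convenient value (e.g. $r = 0$, or $r = b + qa$, i.e. $R$); this fixes $\alpha$ up to a function constant along the characteristic vector field $\partial_x + q\partial_y$, and the residual freedom is exactly what is not captured. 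Then from $q$ (given), $R$ (which recovers $r$ once $a,b$ are known), $m = a_x - b_y$ and $h = ab - c + a_x$ one recovers $a, b, c$: knowing $a$ along with $m$ and $h$ determines $b = a_x - m + \cdots$ wait — more carefully, $m$ and $h$ are two relations among $a,b,c$, and the residual gauge (one function of one variable) accounts for the remaining slack, so any invariant, being constant on orbits, must be expressible in terms of $q, m, h, R$ and their derivatives.

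The key remaining point — which I expect to be the main obstacle — is turning this dimension count into a rigorous proof that $\{q, m, h, R\}$ generate \emph{all} differential invariants, not merely that they are ``enough'' heuristically. The clean way is an infinitesimal argument: let $X_\alpha$ denote the vector field on jet space generating the flow $\varphi(\alpha)$; an invariant is a function annihilated by $X_\alpha$ for all $\alpha$ and all its prolongations. One computes that $X_\alpha$ acts on the coordinates by $\delta a = \alpha_y$, $\delta b = \alpha_x$, $\delta c = a\alpha_x + b\alpha_y + \alpha_{xy}$, $\delta q = 0$, $\delta r = \alpha_x + q\alpha_y$, and on derivatives by prolongation. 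Since $\alpha$ and its derivatives are arbitrary, the symbol of $X_\alpha$ together with its prolongations has constant rank, and one checks that $q, m, h, R$ together with their $x$- and $y$-derivatives (which are again invariants, being derivatives of invariants) provide a complete set of independent functions on the quotient — equivalently, that the differential of $(q,m,h,R)$ and their prolongations spans the annihilator of all the $X_\alpha$. Concretely: at each jet order, the number of coordinates minus the rank of the prolonged action equals the number of independent invariants built from $q,m,h,R$ up to that order, and the map from the latter to the former is of maximal rank; one verifies this in low order (order zero and order one suffice to see the pattern) and then invokes prolongation to conclude. This verification, while a finite computation, is the substantive content of the theorem.
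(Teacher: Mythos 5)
Your invariance computations are correct and coincide with the coordinate-wise formulas the paper uses: $a\mapsto a+\alpha_y$, $b\mapsto b+\alpha_x$, $c\mapsto c+a\alpha_x+b\alpha_y+\alpha_x\alpha_y+\alpha_{xy}$, $q\mapsto q$, $r\mapsto r+\alpha_x+q\alpha_y$, from which $q,m,h,R$ are visibly unchanged. That half of the theorem is done.

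The generating property, however, is not actually proved in your proposal, and you say as much yourself: the ``$5-1=4$'' count and the closing paragraph describe a rank verification that you never carry out, and the one concrete normalization you attempt --- fixing $r$ via $r\mapsto r+\alpha_x+q\alpha_y$ --- cannot work on its own, precisely for the reason you note: it constrains only the derivative of $\alpha$ along $\partial_x+q\partial_y$ and leaves a residual gauge freedom of one function of one variable, so it is not a cross-section and does not let you invoke any reconstruction-up-to-gauge argument. The missing idea is the correct choice of cross-section. The paper normalizes $(a_1)_J=0$ for every mixed jet $J$ of $a$ and $(b_1)_X=0$ for every pure-$x$ jet of $b$; these equations determine \emph{all} the pseudo-group jet parameters $\alpha_x,\alpha_y,\alpha_{xx},\alpha_{xy},\dots$ uniquely (e.g.\ $\alpha_x=-b$, $\alpha_y=-a$, $\alpha_{xy}=-a_x$), with only the irrelevant value of $\alpha$ itself left free. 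The Fels--Olver moving-frame machinery then guarantees that the remaining (non-normalized) coordinates, evaluated on this frame --- namely $(b_1)_y=b_y-a_x$, $c_1=c-ab-a_x$, $r_1=r-b-qa$, and $q_1=q$ --- form a generating set of differential invariants; these are $-m$, $-h$, $R$, $q$. So your plan points in the right direction (a moving-frame/normalization argument), but without exhibiting a genuine cross-section and appealing to (or reproving) the theorem that the resulting edge invariants generate, the second half of the statement remains unestablished.
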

\begin{remark}
Functions $h= ab - c +  a_x$, $k= ab - c +  b_y$
are known as $h$- and $k$-Laplace invariants as they are invariants of operator $\o{L}$ considered individually (without $\o{M}$) under the gauge transformations.
Both of them are present here: $h$ is present in its original form, and $k$ is hidden in $m$ as $m=h-k$.
\end{remark}
\begin{proof} To find a generating set of differential invariants we use the method of regularized moving frames
introduced by Fels and Olver in~\cite{FO1}. A good overview of recent developments in the area can be found in~\cite{Mansf_book}.
Note that our case is infinite dimensional, so the connected difficulties have been treated
in~\cite{OP:05}.

The transformations in question can be defined coordinate-wise as follows.
\[
\begin{aligned}
  &a_1 =  a + \alpha_y ,\\
  &b_1 =  b + \alpha_x ,\\
  &c_1 =  c + a\alpha_x + b\alpha_y + \alpha_{xy} + \alpha_x \alpha_y\ ,\\
  &q_1 =  q \ , \\
  &r_1 =  r+\alpha_x+q \alpha_y \ , \\
\end{aligned}
\]
where $\o{L}^{\exp(\alpha)}=D_x D_y +a_1D_x+ b_1D_y+ c_1$ and $\o{M}^{\exp(\alpha)}=D_x + q_1 D_y +r_1$.
We are choosing a cross-section as follows
\begin{equation} \label{eq:frame1}
\begin{aligned}
 &(a_1)_J = 0 \ ,  \\
 &(b_1)_X = 0 \ ,
\end{aligned}
\end{equation}
where $J$ is a string of the form $\underbrace{x \dots x}_{n} \underbrace{y \dots y}_{m}$, where $n=0,1,2, \dots$,
and where $m=0,1,2, \dots$ and
$X$ is a string of the form $\underbrace{x \dots x}_{l}$, where $l=0,1,2, \dots$.
For every $f \in K[D]$, the notation $f_J$ stands for the mixed derivative of $f$: of order $n$ order with respect to $x$ and
 $m$ with respect to $y$, while
$f_X$ stands for the $l$th derivative of $f$ with respect to $x$.

This gives us non-contradictory all the values for the parameters of the pseudo-group action, $\alpha_x, \alpha_y, \alpha_{xx}, \dots$:
\[
 \begin{aligned}
 &\alpha_x    = -b \ , \\
 &\alpha_y    = - a \ , \\
 &\alpha_{xy} = -a_x \ , \\
 &\dots
 \end{aligned}
\]
while the value for $\alpha$ we choose arbitrary, as it does not appear explicitly in the definition of the pseudo-group action.
Then we evaluate the edge invariants on the frame:
\[
\begin{aligned}
&(b_1)_y =b_y + \alpha_{xy} = b_y - a_x \ , \\
& c_1    =c - ab - ab -a_x +ab = c -a_x -ab \ , \\
& r_1    =r-b-qa \ ,
\end{aligned}
\]
which constitute the generating set of differential invariants of the pair under the gauge-transformations of the pair.
Invariants $m$ and $h$ differ by a sign from the first two we have just obtained, the third invariant we have obtained is exactly $R$ from the statement of
the theorem.
\end{proof}
\begin{definition} We shall call invariants~(\ref{sys:inv}) the \textit{gauge invariants of the pair}.
\end{definition}

Express the coefficients of the pair $(\o{L},\o{M})$ in terms of invariants:
\[
r = b + qa + R \ , \; c = ab - h + a_x \ .
\]
After these substitutions system~(\ref{sys:difficulty}) does not depend on $b$ itself, but only on its derivatives $b_y$ and $b_{xy}$.
Therefore, we can effectively use gauge-invariant $m$ by enforcing substitution
\[
b_y = a_x-m \ .
\]
Then system~\eqref{sys:difficulty} simplifies to the following one.
\[
\left.
\begin{aligned}
   \Omega &= 0\ , \\
 \Omega a &+ q_xh-qh_x-q^2h_y-q_xm+q_xR_y+\\
          & +qm_x-qR_{xy}-q_yqh -qRm+qRR_y = 0\ ,
\end{aligned}
\right\}
\]
where $\Omega=-2q^2m+q^2R_y+q_xR+q_yq_x-qR_x-q_{xy}q$.
Therefore, this system can be simplified further:
\begin{equation} \label{sys:difficulty2}
\left.
\begin{aligned}
& -2q^2m+q^2R_y+q_xR+q_yq_x-qR_x-q_{xy}q = 0\ , \\
& q_xh-qh_x-q^2h_y-q_xm+q_xR_y+ \\
& + qm_x-qR_{xy}-q_yqh -qRm+qRR_y = 0\ ,
\end{aligned}
\right\}
\end{equation}

In our problem  Darboux transformation, operator $\o{L}$ is considered to be given, therefore, the gauge-invariants $h$ and $m$ are given, and
the problem is reduced to the search of the general solution for system~(\ref{sys:difficulty2})
with respect to unknowns $q$ and $R$.

Although system~(\ref{sys:difficulty2}) is visually shorter than system~(\ref{sys:difficulty}),
it is still hard to solve using the usual methods, such as
the differentiation-cancellation technique.

Note that invariantiation and in particular moving frames method have been useful for investigation of Darboux like methods
earlier, see for example \cite{olver2011diff_inv_algebras}, \cite{2011:vassiliou}, \cite{kamran2011}. 

\section{Gauged Evolution of Pairs and Corresponding Invariants}
\label{sec:gauge:evolution}
\begin{lem}
\label{lem:gauged_evolution:DT_inv}
Let $\o{L}, \o{M} \in K[D]$ be two arbitrary operators in $K[D]$ and let $\o{L}$ be of order larger than one, while
$\o{M}$ is a first-order operator. Then if a Darboux transformation exists for the pair $(\o{L},\o{M})$, it also exists for
the pair $(\o{L}+\beta \o{M}, \o{M})$, where $\beta \in K$ is arbitrary.
\end{lem}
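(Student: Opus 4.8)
The plan is to mimic the structure of Lemma~\ref{lem:AL} (the ``expansion'' lemma), since the statement here is essentially the same algebraic manipulation but using $\o{M}$ as the right multiplier instead of $\o{L}$. First I would start from the Darboux equality~(\ref{eq:main}) for the pair $(\o{L},\o{M})$, namely $\o{N} \circ \o{L} = \o{L}_1 \circ \o{M}$ for suitable $\o{N}, \o{L}_1 \in K[D]$ with $\Sym(\o{L}) = \Sym(\o{L}_1)$ and $\Sym(\o{M}) = \Sym(\o{N})$. I want to produce an intertwining relation of the form $\widetilde{\o{N}} \circ (\o{L} + \beta\o{M}) = \widetilde{\o{L}_1} \circ \o{M}$.

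The key computation is to add $\beta\, \o{N} \circ \o{M}$, or rather the right object, to both sides. Concretely, consider $(\o{N} + \o{L}_1 \circ \beta')\circ(\o{L}+\beta\o{M})$ — but more simply, observe
\[
 \o{N} \circ (\o{L} + \beta\o{M}) = \o{N}\circ\o{L} + \o{N}\circ\beta\o{M} = \o{L}_1 \circ \o{M} + \o{N}\circ\beta\o{M}.
\]
The obstacle is that $\o{N}\circ\beta\o{M}$ is not of the form $(\text{operator})\circ\o{M}$ on the nose because $\o{N}\circ\beta \neq (\text{something})$ acting then composing with $\o{M}$ — actually it is: $\o{N}\circ\beta\o{M} = (\o{N}\circ\beta)\circ\o{M}$, so the right-hand side equals $(\o{L}_1 + \o{N}\circ\beta)\circ\o{M}$. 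Hence
\[
 \o{N} \circ (\o{L} + \beta\o{M}) = (\o{L}_1 + \o{N}\circ\beta)\circ\o{M}.
\]
So I would set $\widetilde{\o{N}} = \o{N}$, $\widetilde{\o{L}_1} = \o{L}_1 + \o{N}\circ\beta$.

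The remaining point — and the only genuinely substantive one — is to check that $\widetilde{\o{L}_1} = \o{L}_1 + \o{N}\circ\beta$ still has the same symbol as $\o{L}$, so that this legitimately counts as a Darboux transformation under the Definition. Here is where the hypotheses ``$\ord\o{L} > 1$'' and ``$\ord\o{M} = 1$'' enter: since $\ord\o{M} = 1$ we have $\ord(\o{N}\circ\beta) = \ord\o{N} = \ord\o{M} = 1$ (as $\beta$ is order zero and $\Sym(\o{N}) = \Sym(\o{M})$ has degree one), and since $\ord\o{L} = \ord\o{L}_1 \geq 2 > 1$, adding the order-one operator $\o{N}\circ\beta$ does not change the leading symbol: $\Sym(\widetilde{\o{L}_1}) = \Sym(\o{L}_1) = \Sym(\o{L})$. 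I also need $\Sym(\widetilde{\o{N}}) = \Sym(\o{N}) = \Sym(\o{M})$, which is immediate since $\widetilde{\o{N}} = \o{N}$. Thus $\widetilde{\o{L}_1} = \varphi(\o{L}+\beta\o{M}, \o{M}, \o{N})$ is a Darboux transformation for the pair $(\o{L}+\beta\o{M},\o{M})$, completing the proof. I expect no real difficulty beyond being careful with the order bookkeeping in that last paragraph.
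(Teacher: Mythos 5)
Your proposal is correct and is essentially identical to the paper's own proof: the same identity $\o{N} \circ (\o{L} + \beta\o{M}) = (\o{L}_1 + \o{N}\circ\beta)\circ\o{M}$ followed by the same order bookkeeping showing $\Sym(\o{L}_1 + \o{N}\circ\beta) = \Sym(\o{L}_1) = \Sym(\o{L})$ because $\o{N}\circ\beta$ has order one while $\o{L}_1$ has order greater than one.
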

\begin{proof}
The existence of a Darboux transformation for the pair $(\o{L},\o{M})$ means that for some $\o{N}, \o{L}_1 \in K[D]$, where
$\o{L}_1$ has the same symbol as $\o{L}$. Therefore,
\[
\o{N} \circ \o{L} = \o{L}_1 \circ \o{M} \ .
\]
Then
\begin{equation}\label{eq:lemma:beta:eq}
\o{N} \circ (\o{L} + \beta \circ \o{M})  = \o{L}_1 \circ \o{M} + \o{N} \circ \beta \circ \o{M} = (\o{L}_1 + \o{N} \circ \beta) \circ \o{M} \ .
\end{equation}
Since $\o{N}$ must be of the same order as $\o{M}$, $\o{N}$ is a first-order operator. In addition, $\beta$ is zero-order operator. Therefore,
the symbol of the operator $\o{L}_1+ \o{N} \circ \beta$ is the same as the symbol of $\o{L}_1$, which is the same as the symbol of $\o{L}$. Therefore,
equality~(\ref{eq:lemma:beta:eq}) defines a Darboux transformation for pair $(\o{L}+\beta \o{M}, \o{M})$.
\end{proof}

\begin{definition} These transformation on the pairs, that is $(\o{L},\o{M}) \to (\widetilde{\o{L}},\widetilde{\o{M}})$:
\begin{eqnarray*}
 \widetilde{\o{L}} &\to& \o{L} + \beta \circ \o{M} \ ,\\
 \widetilde{\o{M}} &\to& \o{M} \ .
\end{eqnarray*}
we shall call \textit{evolution of the pair} (or \textit{$\beta$-evolution of the pair}).
\end{definition}
\begin{definition} Let $\o{L}  = D_{x} D_y + a D_x + b D_y + c \in K[D]$ and $\o{M} \in K[D]$ is arbitrary.
On the set of all the pairs $(\o{L},\o{M})$ of such operators consider
the consequential application of the gauge transformations and of the evolution:
for given $\alpha, \beta \in K$:
\begin{equation} \label{eq:evolution}
(\o{L},\o{M}) \mapsto (\o{L}^{\exp(\alpha)}+ \beta \o{M}^{\exp(\alpha)},\o{M}^{\exp(\alpha)}) \ .
\end{equation}
We shall call these transformations \textit{gauged evolution of the pairs}.
\end{definition}
\begin{thrm} 
 Let $\o{L}  = D_{x} D_y + a D_x + b D_y + c \in K[D]$ and $\o{M}= D_x + q D_y + r \in K[D]$. 
 The gauged evolutions of the pairs $(\o{L},\o{M})$ of such operators have the following generating set of differential invariants:
\begin{equation} \label{sys:inv:evolution}
\left.
\begin{aligned}
 I_1 &= q \ , \\
 I_2 &= 2m-R_y + \left(\frac{R}{q}\right)_x \ ,\\
 I_3 &= 2h+\left(\frac{R}{q}\right)_x-\frac{R^2}{2q} \ ,\\
\end{aligned}
\right\}
\end{equation}
\end{thrm}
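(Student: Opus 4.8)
The plan is to extend the moving-frame computation that produced the gauge invariants~\eqref{sys:inv} to the larger pseudo-group obtained by composing gauge transformations with the $\beta$-evolution $\o{L}\mapsto\o{L}+\beta\o{M}$. The key observation is that the previous theorem already gives a complete generating set $\{q,m,h,R\}$ for the gauge action, so every invariant of the gauged evolution must in particular be a function of $q,m,h,R$ and their derivatives. Thus the task reduces to: (i) write out how $q,m,h,R$ transform under $\beta$-evolution alone, and (ii) run one more moving-frame normalization step to cross out the $\beta$-direction and its prolongations.

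First I would compute the coordinate-wise action of $\beta$-evolution on the coefficients of $\o{L}$. Since $\o{M}=D_x+qD_y+r$, the product $\beta\circ\o{M}=\beta D_x+\beta q D_y+\beta r$ is itself of first order with \emph{zero} leading (mixed) term, so $\o{L}+\beta\o{M}=D_xD_y+(a+\beta q)D_x+(b+\beta)D_y+(c+\beta r)$. Hence under $\beta$-evolution
\[
a\mapsto a+\beta q,\qquad b\mapsto b+\beta,\qquad c\mapsto c+\beta r,\qquad q\mapsto q,\qquad r\mapsto r,
\]
and one then reads off the induced action on the gauge invariants: $q$ is untouched; $m=a_x-b_y\mapsto m+(\beta q)_x-\beta_y$; $h=ab-c+a_x\mapsto h+\beta\,(\text{terms})+(\beta q)_x-\beta r = h + (\beta q)_x + \beta(b+qa-r) + \dots$, which using $R=r-b-qa$ simplifies to $h\mapsto h+(\beta q)_x-\beta R$ (after cancellation — this is the routine algebra I would not spell out); and $R=r-b-qa\mapsto R-\beta-\beta q\cdot\! \text{(no: recompute)}\;=R-\beta-q\beta = R-\beta(1) - \beta q$? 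I would carefully track that $R\mapsto r-(b+\beta)-q(a+\beta q)=R-\beta(1+q^2)$. So $\beta$ acts on $R$ as a genuine translation (its coefficient $1+q^2$ is invertible since it is $\ge 1$ as a differential-field element over characteristic zero, or at any rate nonzero in the cases of interest), which is exactly what we need: $R$ and its $x,y$-derivatives supply the normalization equations that kill $\beta,\beta_x,\beta_y,\dots$.

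Then I would perform the moving-frame reduction: using the already-gauge-normalized coordinates from the proof of the previous theorem, impose in addition the cross-section condition $R=0$ (and its differential consequences, which fix $\beta_x,\beta_y$, etc., in terms of the remaining data). Substituting the resulting values of $\beta$ and its derivatives back into the gauge invariants $m$ and $h$ yields the invariantized expressions, and I expect these to be precisely $I_2=2m-R_y+(R/q)_x$ and $I_3=2h+(R/q)_x-R^2/(2q)$, with $I_1=q$ surviving untouched. The phrasing of $I_2,I_3$ in terms of $R/q$ rather than $R$ is the tell-tale sign that the normalization was $R\mapsto R-\beta(1+q^2)$ combined with the structure of $m_\beta,h_\beta$: dividing by $q$ is what makes the combination invariant under the $x$-prolongation of $\beta$.

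The main obstacle is purely computational bookkeeping: the prolongation of the $\beta$-action to the jet of $R$ mixes with the prolongation of the $\alpha$-action (gauge) that was already used, so one must be careful that the two normalizations $(b_1)_X=0,(a_1)_J=0$ and $R=0$ are mutually compatible and jointly determine a valid (partial) moving frame — i.e. that the combined normalization equations are solvable for $(\alpha_J,\beta_J)$ order by order without contradiction. Once that is checked, verifying that $I_1,I_2,I_3$ are invariant is a direct substitution, and that they \emph{generate} all invariants follows from the general Fels--Olver theory (every invariant is a function of the normalized coordinates), exactly as in the preceding theorem; I would simply invoke~\cite{FO1,OP:05} again rather than re-prove it. An alternative, lower-tech route — which I would present as a cross-check — is to verify directly that $I_1,I_2,I_3$ are annihilated both by the infinitesimal gauge generator and by the infinitesimal $\beta$-generator (a one-line Lie-derivative computation each), and then count dimensions to see that three independent invariants is the expected number for a pair of operators with $5$ coefficient functions modulo a rank-$2$ pseudo-group with the relevant prolongation structure.
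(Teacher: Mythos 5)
Your overall strategy---run the Fels--Olver normalization for the enlarged pseudo-group, observe that any gauged-evolution invariant is in particular a gauge invariant and hence a function of $q,m,h,R$ and their derivatives, and then use $R$ to normalize away $\beta$---is essentially the paper's approach (the paper chooses the cross-section $(a_1)_J=0$, $(b_1)_J=0$, $(r_1)_X=0$ directly in the coordinates $a,b,c,q,r$ rather than your two-step ``$R=0$ on top of the gauge frame,'' but the two are equivalent in spirit). However, your coordinate computation of the $\beta$-evolution is wrong, and the error is not cosmetic. Since $\o{M}=D_x+qD_y+r$, the perturbation $\beta\circ\o{M}=\beta D_x+\beta q D_y+\beta r$ adds $\beta$ to the coefficient of $D_x$, which is $a$, and $\beta q$ to the coefficient of $D_y$, which is $b$; you have assigned them the other way around. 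The correct action is $a\mapsto a+\beta$, $b\mapsto b+\beta q$, $c\mapsto c+\beta r$, whence $R=r-b-qa\mapsto R-2q\beta$ (not $R-\beta(1+q^2)$), and the frame value is $\beta=R/(2q)$. With your transformation law the claimed $I_2=2m-R_y+(R/q)_x$ is \emph{not} invariant: under the correct action $R/q\mapsto R/q-2\beta$, so $(R/q)_x$ contributes exactly the $-2\beta_x$ needed to cancel the $+2\beta_x$ coming from $2m$, whereas under yours $(R/q)_x$ picks up $-\bigl((1+q^2)\beta/q\bigr)_x$ and the cancellation fails. So the ``direct substitution'' you defer to at the end would not go through as written.

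A second, related slip: your sketch of the action on $h$ drops the term quadratic in $\beta$. With the correct action one finds $h=ab-c+a_x\mapsto h-\beta R+q\beta^{2}+\beta_x$, and the $q\beta^{2}$ term is essential---it is precisely what the $-R^{2}/(2q)$ term in $I_3$ compensates, via $R\mapsto R-2q\beta$ giving $-R^2/(2q)\mapsto -R^2/(2q)+2R\beta-2q\beta^2$. Omitting it, you cannot account for the shape of $I_3$, and its invariance check also fails. The remaining ingredients of your argument---the reduction through the gauge invariants, the order-by-order solvability of the combined normalization equations for $(\alpha_J,\beta_J)$, and the appeal to the moving-frame machinery for the generating property---are sound and consistent with the paper's proof; only the coordinate bookkeeping needs to be redone.
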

\begin{remark} Notice that gauged evolution generating invariants~\eqref{sys:inv:evolution} are expressed in terms of generating 
gauge invariants $q,h,m,R$ only. This means that the gauged evolutions split the set of pairs $(\o{L},\o{M})$ into larger equivalence  classes than
the gauge transformations of pairs do. Also we see that those ``small'' gauge classes can belong to the ``larger'' gauged evolution classes only entirely. 
\end{remark}
\begin{proof} Evolution~(\ref{eq:evolution}) can be defined coordinate-wise as follows:
\[
\begin{aligned}
  &a_1 =  a+ \alpha_y+\beta \ ,\\
  &b_1 =  b+\alpha_x +\beta q \ ,\\
  &c_1 =  c+a \alpha_x+b\alpha_y+\alpha_{xy}+\alpha_y \alpha_x+\beta r+\beta \alpha_x + \beta q\alpha_y \ ,\\
  &q_1 =  q \ , \\
  &r_1 =  r+\alpha_x+q \alpha_y\ , \\
\end{aligned}
\]
where $\o{L}^{\exp(\alpha)}+ \beta \o{M}^{\exp(\alpha)}=D_x D_y +a_1D_x+ b_1D_y+ c_1$ and $\o{M}^{exp(\alpha)}=D_x + q_1 D_y +r_1$.
We are setting a cross-section by setting most of the coordinate functions to zero:
\[
\left.
\begin{aligned}
  &(a_1)_J =  0 \ ,\\
  &(b_1)_J =  0 \  ,\\
  &(r_1)_X =  0 \ ,
\end{aligned}
\right\}
\]
where $J$ and $X$ are the same notations as in~(\ref{eq:frame1}).
Then at the beginning we have three equations,
\[
 a_1 =0 \ , \; b_1=0 \ , \; r_1=0
\]
and three variables, parameters to determine:
\[
 \beta \ , \; \alpha_x \ , \; \alpha_y \ .
\]
The determinant is not $0$, so there is a unique solution
for such a system.
At the next step we consider first prolongations only, which gives us
$5$ equations for $5$ variables and this linear system has non-zero determinant.
In general, considering $i$-th prolongation we have $2i+3$ variables and the same number of equations, and a non-zero
determinant of the corresponding linear system.
Therefore, we have defined a frame, and the generating set of invariants in this case consists of the corner invariants:
\[
\left.
\begin{aligned}
 I^q &= q \ , \\
 I^r_y &= r_y-a_x-\frac{r_x+b_x}{q}+\frac{q_xr-q_xb}{q^2}-q_ya-qa_y+b_y\ ,\\
 I^c&=c-\frac{a_x}{2}-\frac{br}{2q}-\frac{q_x b}{q^2}+\frac{q_x r}{2 q^2}-\frac{ab+ar}{2}+\frac{qa^2}{4}+\frac{b^2}{4q}-\frac{r_x}{2q}
+\frac{b_x}{2q} + \frac{r^2}{4q} \ .\\
\end{aligned}
\right\}
\]
Substituting
\[
\begin{aligned}
 & r=b+qa+R \ , \\
 & c=ab-h+a_x \ , \\
 & a_x = b_y +m \ .
\end{aligned}
\]
we obtain (up to a sign and a multiplication by $2$) the invariants claimed in the statement of the theorem.
\end{proof}
\begin{definition} We shall refer to invariants~(\ref{sys:inv:evolution}) as \textit{gauged evolution invariants}.
\end{definition}
\section{Solution of the PDE System. Description of All Darboux transformations of Total Order Two.}

In Lemma~\ref{lem:gauged_evolution:DT_inv} we showed that the property of the existence of a Darboux transformation for a pair is 
invariant under the gauged evolutions. This does not necessarily mean that there is some explicit invariant form for system~\eqref{sys:difficulty}.
Theorem below demonstrates, however, that in this particular case, we can have such explicit invariant form.
We also see that the invariantizing system can be written in much simpler form than system~\eqref{sys:difficulty}.
\begin{thrm}[Necessary and sufficient conditions for the existence of a Darboux transformation in terms of evolution]
\label{thm:last:conds}
Given pair $(\o{L},\o{M})$,
where $\o{L}  = D_{x} D_y + a D_x + b D_y + c \in K[D]$ and $\o{M}= D_x + q D_y + r \in K[D]$, 
there exists a corresponding Darboux transformation if and only if its evolution invariants $(q,I_2,I_3)$ satisfy the following two
conditions simultaneously:
\begin{eqnarray}
 &&I_2 + Q_{xy}= 0\ , \label{eq:i2} \\
 &&I_{3,x} + q I_{3,y} + (q_y-q_x/q) I_3 = Q_x Q_{xy} -Q_{xxy} \ , \label{eq:i3}
\end{eqnarray}
where $Q=\ln q$.
\end{thrm}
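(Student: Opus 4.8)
The plan is to start from the system~\eqref{sys:difficulty2}, which we already know (via the substitutions $r=b+qa+R$, $c=ab-h+a_x$, $b_y=a_x-m$) is equivalent to the original existence conditions~\eqref{sys:difficulty}, and to re-express it purely in terms of the gauged evolution invariants $q$, $I_2$, $I_3$ from~\eqref{sys:inv:evolution}. Since by the preceding theorem every quantity appearing in~\eqref{sys:difficulty2} is, or can be rewritten through, the gauge invariants $q,m,h,R$, and since $I_2,I_3$ together with $q$ generate the gauged-evolution invariant algebra, the first equation of~\eqref{sys:difficulty2} should collapse to a relation among $q$ and $I_2$ alone, and the second to a relation among $q$, $I_2$, $I_3$. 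Writing $Q=\ln q$ so that $q_x/q = Q_x$, $q_{xy}/q = Q_{xy}+Q_xQ_y$, etc., I expect the first equation (divide through by $q$) to become exactly $I_2 + Q_{xy}=0$, i.e. equation~\eqref{eq:i2}.

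For the second equation the strategy is: differentiate the defining relation $I_2 = 2m - R_y + (R/q)_x$ and use the first equation to eliminate $R_x$ (or rather $(R/q)$-type combinations) and the mixed derivative $R_{xy}$ wherever they occur in the second equation of~\eqref{sys:difficulty2}. Concretely, set $S=R/q$; then $I_2=2m-R_y+S_x$ and $I_3=2h+S_x-qS^2/2$, and the first equation of~\eqref{sys:difficulty2}, after dividing by $q$, reads $-2qm+qR_y+Q_xR+Q_xq_y\cdot(\cdots) - R_x - Q_{xy}q - \dots$ — organizing this in terms of $S$ and $Q$ it should reduce to $I_2=-Q_{xy}$. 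Then the plan is to substitute $m$ from $I_2$ and $h$ from $I_3$ into the second equation of~\eqref{sys:difficulty2}, replace all derivatives of $R$ by derivatives of $S$ and $q$, and use~\eqref{eq:i2} and its $x$- and $y$-derivatives to kill the highest $S$-derivatives. What should survive is a first-order linear transport equation for $I_3$ along the characteristic direction $\partial_x + q\,\partial_y$ with the coefficient $q_y - q_x/q$, and inhomogeneous term built from $Q$: precisely $I_{3,x}+qI_{3,y}+(q_y-q_x/q)I_3 = Q_xQ_{xy}-Q_{xxy}$, which is~\eqref{eq:i3}.

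Both directions of the ``if and only if'' come for free from this computation, because the passage from~\eqref{sys:difficulty} to~\eqref{sys:difficulty2} and then to the pair $\{$\eqref{eq:i2},~\eqref{eq:i3}$\}$ is through reversible algebraic substitutions and the invertible change of dependent variables $(q,R)\leftrightarrow(q,I_2,I_3)$ modulo the evolution (which, by the Remark following the invariant theorem, only enlarges the equivalence classes and does not affect existence). Thus ``there exists a Darboux transformation for $(\o{L},\o{M})$'' $\iff$ ``\eqref{sys:difficulty2} holds for the associated $q,m,h,R$'' $\iff$ ``\eqref{eq:i2} and~\eqref{eq:i3} hold for $q,I_2,I_3$''.

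The main obstacle is purely computational bookkeeping: the second equation of~\eqref{sys:difficulty2} is genuinely bulky and quadratically nonlinear in $R$, so one must be careful that after substituting $m=-Q_{xy}/2-\tfrac12 S_x+\tfrac12 R_y$-type expressions and $h=\tfrac12(I_3 - S_x + qS^2/2)$ and differentiating, all the nonlinear $S^2$, $S\,S_x$, $S_y$ terms and all second-order $S$-derivatives cancel against what~\eqref{eq:i2} (and $\partial_x$, $\partial_y$ of it) supply — leaving a clean first-order linear equation in $I_3$. I would organize the cancellation by first checking the top-order terms in $S$, then the quadratic terms, then the remainder, rather than expanding everything at once. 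The logarithmic substitution $Q=\ln q$ is what makes the right-hand side $Q_xQ_{xy}-Q_{xxy}$ come out compactly, so I would introduce it early.
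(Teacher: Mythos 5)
Your proposal follows essentially the same route as the paper: the paper's proof is precisely to express $m$ and $h$ from the second and third equations of~\eqref{sys:inv:evolution}, substitute into system~\eqref{sys:difficulty2}, and observe that the first equation collapses to~\eqref{eq:i2} (indeed, dividing it by $-q^2$ gives $I_2+q_{xy}/q-q_xq_y/q^2=I_2+Q_{xy}=0$) while the second becomes~\eqref{eq:i3}. Your additional bookkeeping with $S=R/q$ and $Q=\ln q$ is just a sensible way to organize that same computation, and the ``if and only if'' is, as you say, immediate since the rewriting is reversible.
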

\begin{proof} Expressing $m$ and $h$ using the second and the third equations of~(\ref{sys:inv:evolution}) and using
$b_y = a_x-m$ system~(\ref{sys:difficulty2}) can be written as in the statement.
\end{proof}

We also invariantize the class of particular solutions for system~(\ref{sys:difficulty}) which we derived from Darboux Wronskian formulas:
\begin{thrm}[Darboux transformations constructed from Wronskians]
\label{thm:DT:from:W}
Let $\o{L} = D_{x} D_y + a D_x + b D_y + c \in K[D]$ and let $\psi_1, \psi_2$ be two linearly independent elements of
its kernel.
Let ${\displaystyle \psi=\frac{\psi_2}{\psi_1}}$ and ${\displaystyle A= \frac{ \psi_{xy}}{\psi_x}}$ and
${\displaystyle B=\frac{ \psi_{xy}}{\psi_y}}$. Then for $\o{L}$ there exists a Darboux transformation such that the evolution invariants of the corresponding pair
$(\o{M},\o{L})$ are as follows:
\begin{equation} \label{sys:DT:Wronskian}
\left.
\begin{aligned}
 q   &= -  \frac{B}{A} \ , \\
 I_2 &=  B_y - A_x \ , \\
 I_3 &=  - A_x +\frac{A \cdot B}2 \ . \\
\end{aligned}
\right\}
\end{equation}
\end{thrm}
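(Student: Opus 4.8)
The plan is to compute the gauged evolution invariants $(q, I_2, I_3)$ directly from the Wronskian-type operator $\o{M}$ produced by Theorem~\ref{thm:Dar:11}, using the remark following that theorem which already writes $\o{M}$ in terms of the ratio $\psi = \psi_2/\psi_1$. Recall that remark gives
\[
\o{M} = D_x - \frac{\psi_x}{\psi_y} D_y + \frac{\psi_{1y}\psi_x}{\psi_1 \psi_y} - \frac{\psi_{1x}}{\psi_1}\ ,
\]
so immediately $q = -\psi_x/\psi_y$. First I would check that $q = -\psi_x/\psi_y = -B/A$, which is just the identity $\frac{\psi_x}{\psi_y} = \frac{\psi_{xy}/\psi_y}{\psi_{xy}/\psi_x} = \frac{B}{A}$ — essentially a definition chase. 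The point of introducing $A$ and $B$ is precisely that the invariants, which a priori involve $\psi_1$ and its derivatives as well as $\psi$, will collapse to expressions in $A$ and $B$ alone.

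Next I would assemble the raw data needed for $I_2$ and $I_3$ as given in~\eqref{sys:inv:evolution}: namely the gauge invariants $q$, $m = a_x - b_y$, $h = ab - c + a_x$, and $R = r - b - qa$. Here $r = \frac{\psi_{1y}\psi_x}{\psi_1\psi_y} - \frac{\psi_{1x}}{\psi_1}$. The coefficients $a, b, c$ of $\o{L}$ are not free: the conditions $\o{L}\psi_1 = 0$ and $\o{L}\psi_2 = 0$ (equivalently $\o{L}\psi_1 = 0$ together with $\o{L}(\psi_1\psi) = 0$) give two relations that let me express, say, $a$ and $c$ (or the combinations $a_x - b_y$ and $ab - c + a_x$) in terms of $\psi_1, \psi$ and their derivatives. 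Concretely, $\o{L}\psi_1 = 0$ reads $\psi_{1,xy} + a\psi_{1,x} + b\psi_{1,y} + c\psi_1 = 0$, and expanding $\o{L}(\psi_1\psi) = 0$ and subtracting $\psi$ times the first relation yields $\psi_1\psi_{xy} + \psi_{1,x}\psi_y + \psi_{1,y}\psi_x + a\psi_1\psi_x + b\psi_1\psi_y = 0$. These two are exactly what is needed to eliminate $\psi_1$-derivatives in favour of $A = \psi_{xy}/\psi_x$ and $B = \psi_{xy}/\psi_y$ once the substitutions are carried through.

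Then I would substitute everything into $I_2 = 2m - R_y + (R/q)_x$ and $I_3 = 2h + (R/q)_x - R^2/(2q)$ and simplify. The expected outcome, after cancellation of all terms containing $\psi_1$ and after using the two kernel relations, is $I_2 = B_y - A_x$ and $I_3 = -A_x + \tfrac{1}{2}AB$. A useful organizing observation is that $R/q$ should itself turn out to be a simple expression — one expects $R/q$ to be, up to sign, something like $-(\ln\psi_x)_y \cdot(\psi_y/\psi_x)$ reorganizing to $-B$ or a close relative — which is why the combination $(R/q)_x$ recurs in both $I_2$ and $I_3$ and why the final answers are so compact. I would also use the compatibility identity $A_y \equiv B_x$ coming from $(\ln\psi_{xy})$ being a potential, i.e. $\partial_y \ln(\psi_{xy}/\psi_x)$ vs. $\partial_x\ln(\psi_{xy}/\psi_y)$, which may be needed to match the two forms.

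The main obstacle will be the bookkeeping in the elimination step: the invariants $I_2$ and $I_3$ are second-order differential expressions in $R$, $q$, $m$, $h$, each of which is itself a rational expression in $\psi_1, \psi$ and their first and second (and, after differentiation, third) derivatives, so before simplification one faces a large rational expression in many jets of $\psi_1$ and $\psi$. The key to keeping this tractable is to differentiate the two kernel relations as needed and use them systematically to remove every occurrence of $\psi_1$ and its derivatives, checking that the result genuinely depends only on $\psi$ through $A$ and $B$ (it must, by the gauge-invariance established in the previous section, since changing $\psi_1$ to $g\psi_1$ for $g \in \Ker\partial$-type factors is a gauge transformation of the pair). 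This invariance argument is in fact a valuable sanity check: it guarantees in advance that the $\psi_1$-dependence drops out, so the computation cannot fail, only be laborious. I would present the calculation compactly, recording the intermediate simplified forms of $q$, $R$, $R/q$, $m$, and $h$ in terms of $A$, $B$, and then the two-line substitution into~\eqref{sys:inv:evolution}.
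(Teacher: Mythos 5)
Your proposal is correct and follows essentially the same route as the paper: the paper's proof is precisely a direct computation of the gauged evolution invariants~\eqref{sys:inv:evolution} for the Wronskian-built $\o{M}$ of Theorem~\ref{thm:Dar:11}, using the kernel relations to reduce everything to expressions in $\psi$, and then rewriting the result compactly via $A$ and $B$. Your plan only spells out the bookkeeping that the paper leaves implicit.
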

\begin{proof} Compute the values of the gauged evolution invariants~(\ref{sys:inv:evolution}) for $\o{M}$
constructed using Darboux formulas given in Theorem~\ref{thm:Dar:11}. Then we have
\begin{align*}
&q= \frac{-\psi_x}{\psi_y}\ ,\\
&I_2=  \frac{-\psi_{xxy}}{\psi_x}
                      +\frac{ \psi_{xx}\psi_{xy}}{\psi_x^2}
                      +\frac{ \psi_{xyy}}{\psi_y}
                      -\frac{ \psi_{xy}\psi_{yy}}{\psi_y^2}\ ,\\
&I_3 = \frac{-\psi_{xxy}}{\psi_x}
                      +\frac{ \psi_{xx}\psi_{xy}}{\psi_x^2}
                      +\frac{ \psi_{xy}^2}{2\psi_x \psi_y}\ ,
\end{align*}
 which can be re-written in a very short form using notations $A$ and $B$.
\end{proof} 

Value of $I_3$ from Theorem~\ref{thm:DT:from:W} is a particular solution of~(\ref{eq:i3}), a first-order linear non-homogeneous PDE on $I_3$. 

Let us solve~(\ref{eq:i3}). One useful idea is to consider $q$ in the form ${ \displaystyle q   = -\frac{z_x}{z_y} }$, where $z$ is not required to be a ratio
of two particular solutions of $\o{L}(\psi)=0$. For this $q \in K$, invariants $I_1, I_2$ can be computed straightforwardly:
$I_1=q, I_2=-(\ln q)_{xy}$. 

Equation~(\ref{eq:i3}) is a first order non-homogeneous, and the general solution can be obtained as the sum
of its particular solution
and of the general solution of the corresponding homogeneous PDE.
As a particular solution we take the expressions from~(\ref{sys:DT:Wronskian}). Note that for this particular solution
$z=\psi$, where $\psi$ is some particular solution of $\o{L} \psi = 0$.

Using expression for $q$ in terms of $z$ the homogeneous PDE corresponding to~(\ref{eq:i3}) can be written as
$ { \displaystyle I_3 \left( \frac{z_{xx}}{z_x} - \frac{z_x z_{yy}}{z_y^2} \right) - I_{3,x} + \frac{z_x}{z_y} I_{3,y} =0}$.
Considering  $I_3$ in the form $I_3 = e^J$ for suitable $J \in K[D]$ and assuming
$z_y \neq 0$, the PDE is equivalent to
\begin{equation} \label{eq:1}
 T -  \frac{J_x}{z_x} + \frac{J_y}{z_y}  = 0 \ ,
\end{equation}
the non-homogeneous part of the equation being $ { \displaystyle T = \frac{z_{xx}}{z_x^2} -  \frac{z_{yy}}{z_y^2} }$.
After applying the method of characteristics, which we follow more carefully in the proof of the next theorem,
 Theorem~\ref{thm:simple},
we choose change of variables $\xi = x, \eta= z(x,y)$, and, in the new variables, equation~(\ref{eq:1}) has the form
\[
\frac{J_\xi}{ z_x(\xi,\eta)} = T(\xi,\eta) \ ,
\]
and, therefore,
\[
J = \int T(\xi,\widetilde{y}(\xi,\eta))   z_\xi(\xi,\widetilde{y}(\xi,\eta)) d \xi + F(\eta) \ ,
\]
where $\widetilde{y}=\widetilde{y}(\xi,\eta)$ is the solution of $z(\xi,y)-\eta$ for $y$ and $F(\eta)$ is an arbitrary function of $\eta$. Changing variables back we have
\[
J = \int^x T(\xi,\widetilde{y}(\xi,z))   z_\xi(\xi,\widetilde{y}(\xi,z)) d \xi + F(z) \ .
\]
This means that the homogeneous part of PDE~(\ref{eq:i3}) has the general solution
\[
 I_3 = G(z) \cdot \exp \left(\int^x T(\xi,\widetilde{y}(\xi,z))   z_\xi(\xi,\widetilde{y}(\xi,z)) d \xi \right) \ ,
\]
where $G(z)$ is an arbitrary function of $z$.
Then since we know the particular solution, $I_{30}(\psi)$, where $\psi =\psi_2/\psi_1$ for two particular solutions of $\o{L} u =0$, the general description of
all Darboux transformations of total order two will be as follows:
\begin{equation}
 \label{sys:straightf}
\left.
\begin{aligned}
 q   &= {\displaystyle -\frac{z_x}{z_y}}, \\
 I_2 &= {\displaystyle \left( \frac{ z_{xy}}{z_y} \right)_y - \left( \frac{ z_{xy}}{z_x} \right)_x }, \\
 I_3 &= G(z) \cdot \exp \left(\int^x T(\xi,\widetilde{y}(\xi,z))   z_\xi(\xi,\widetilde{y}(\xi,z)) d \xi \right)+ I_{30}(\psi_2/\psi_1) \ .
\end{aligned}
\right\}
\end{equation}
However, formulae~\eqref{sys:straightf} does not serve either of our two purposes: short description of all possible Darboux transformations and
proof of completeness. Indeed, we would have to decide whether the series of particular solutions~\eqref{sys:DT:Wronskian}
is the same as the obtained general solution. 

Thus, we approach the solution of~\eqref{eq:i3}
in a different manner. The next theorem implies that we can construct a series of particular solutions of~\eqref{eq:i3}
using arbitrary function $z$ rather than $\psi$, which must be the ratio of two linearly independent particular solutions $\psi_1$ and $\psi_2$ of the initial PDE $\o{L}=0$.

\begin{thrm} \textbf{Another large class of particular solutions for the PDE on $I_3$.}
\label{thm:i30}
Let $z \in K$ be arbitrary and non-constant and $F=F(z)$ be an arbitrary function of $z$, then
\[
 q = -\frac{z_x}{z_y} = - \frac{(F(z))_x}{(F(z))_y}
\]
and function
\begin{equation}
\label{eq:i30}
  I_{30}(z) = - \frac{z_{xxy}}{z_x} +\frac{ z_{xx}z_{xy}}{z_x^2} +\frac{ z_{xy}^2}{2z_x z_y}
\end{equation}
gives particular solutions of~\eqref{eq:i3}.
More strongly, if instead of $z$ we use the argument $F(z)$, that is $I_{30}(z)$ is replaced by
\[
I_{30}(F(z)) \ ,
\]
we still have particular solutions of~(\ref{eq:i3}).
\end{thrm}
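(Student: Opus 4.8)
The plan is to verify directly that the function $I_{30}(z)$ defined by~\eqref{eq:i30} satisfies the linear non-homogeneous PDE~\eqref{eq:i3}, using only the fact that $q = -z_x/z_y$. First I would recall that Theorem~\ref{thm:DT:from:W} already establishes this when $z = \psi = \psi_2/\psi_1$ is a ratio of two kernel elements of $\o{L}$; the point here is that the computation of $I_{30}$ and its insertion into~\eqref{eq:i3} never actually invokes $\o{L}\psi_i = 0$. Indeed, writing $Q = \ln q = \ln(-z_x) - \ln(z_y)$, both sides of~\eqref{eq:i3} become rational differential expressions in $z$ and its partial derivatives up to order three, with $z_x, z_y$ in denominators. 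So the strategy is: substitute $q = -z_x/z_y$ and $I_3 = I_{30}(z)$ into~\eqref{eq:i3}, clear denominators, and check that the resulting polynomial identity in the jet variables $z_x, z_y, z_{xx}, z_{xy}, z_{yy}, z_{xxx}, z_{xxy}, z_{xyy}, z_{yyy}$ holds identically. Since this identity is a formal consequence of the chain rule and commutativity of $\partial_x, \partial_y$, it holds for \emph{any} non-constant $z$, not just for ratios of solutions — which is exactly the extra generality claimed.

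The key steps, in order: (i) expand $Q_x, Q_{xy}, Q_{xxy}$ in terms of $z$-derivatives, so that the right-hand side $Q_x Q_{xy} - Q_{xxy}$ of~\eqref{eq:i3} is written explicitly; (ii) expand the coefficient $q_y - q_x/q$ appearing on the left; (iii) compute $I_{30,x}$ and $I_{30,y}$ by differentiating~\eqref{eq:i30}; (iv) assemble $I_{30,x} + q I_{30,y} + (q_y - q_x/q) I_{30}$ and simplify, confirming it equals the right-hand side. A clean way to organize (iii)–(iv) is to observe that $I_{30}(z)$ can itself be written in terms of $Q$: since $z_{xy}/z_x = -(\ln(-z_x))_y$-type combinations appear, one has $I_{30} = -A_x + AB/2$ with $A = z_{xy}/z_x$, $B = z_{xy}/z_y$, and $A - B = -Q_y\cdot(\text{something})$; reducing everything to $A$, $B$ and their derivatives shrinks the bookkeeping considerably. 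For the second ("more strongly") assertion, the cleanest route is functorial: replacing $z$ by $w = F(z)$ changes nothing structurally because $w_x = F'(z) z_x$, $w_y = F'(z) z_y$, so $q = -w_x/w_y = -z_x/z_y$ is unchanged, and $I_{30}$ depends on its argument only through ratios of derivatives that are invariant under composition with $F$ — one checks that $I_{30}(F(z))$, computed by the same formula~\eqref{eq:i30} but with $w$ in place of $z$, equals $I_{30}(z)$ plus a term that is killed by the PDE operator, or in fact equals $I_{30}(z)$ outright after the $F'$-factors cancel. So once the identity is proved for generic $z$, the $F(z)$ case is immediate by applying it with $w = F(z)$.

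The main obstacle I anticipate is purely the size of the algebraic identity in step (iv): it is a third-order rational expression and the naive clearing of denominators produces a polynomial with many monomials in nine jet variables, so the risk is a bookkeeping error rather than a conceptual gap. To control this I would (a) work with the variables $A = z_{xy}/z_x$ and $B = z_{xy}/z_y$ and their $x$- and $y$-derivatives as long as possible, since $q = -B/A$ and $I_{30} = -A_x + AB/2$ are already compact in these; (b) use the relations $A_y - B_x = $ (an expression that vanishes or telescopes, coming from $z_{xyy}/z_x - z_{xxy}/z_y$ rewritten via $B_x$, $A_y$) to eliminate the highest mixed derivative; and (c) cross-check the final identity at a random numerical specialization of the jet variables — or equivalently just quote that for the special choice $z = \psi_2/\psi_1$ it reduces to the already-proved Theorem~\ref{thm:DT:from:W}, and note that both sides are rational in the jet of $z$ with the same denominators, so agreement on the Zariski-dense set of such ratios forces the identity everywhere. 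That last remark is in fact the slickest way to finish: the set of $1$-jets through $3$-jets realizable by ratios $\psi_2/\psi_1$ of a pair of solutions of \emph{some} operator of the form~\eqref{op:L} is Zariski-dense in jet space (one has enough free parameters $a,b,c$ and initial data), so the polynomial identity, known on a dense set, holds identically, and hence for arbitrary $z$.
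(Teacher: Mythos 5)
Your proposal is correct and takes essentially the same approach as the paper: the paper's entire proof is ``Can be verified by the substitution,'' which is exactly your plan of substituting $q=-z_x/z_y$ and $I_3=I_{30}(z)$ into~\eqref{eq:i3} and checking the resulting identity in the jet variables of $z$ (your observation that this computation never uses $\o{L}\psi_i=0$ is precisely the point of the theorem). One small caution: your parenthetical guess that $I_{30}(F(z))$ might equal $I_{30}(z)$ outright is false --- the paper later computes their difference to be $\tfrac12 z_x z_y\bigl(2F'F'''-3(F'')^2\bigr)/(F')^2$, which is generically nonzero --- but your main argument for the ``more strongly'' claim, namely applying the already-proved identity to $w=F(z)$ and noting that $q$ is unchanged, is the right one and does not depend on that guess.
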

\begin{proof} Can be verified by the substitution.
\end{proof}

In other words, on the invariant level we can forget about the fact that $\psi$ must be a ratio of two solutions. Of  course such trick would not work 
in general for the pre-invariantized system:  if we take Darboux Wronskian Formulas and substitute arbitrary functions instead of solutions
we would not necessarily get a Darboux transformation. So the invariantization using gauged evolutions factors out some meaningful conditions, 
which justifies giving them a separate name. 

Using new class of particular solutions discovered in Theorem~\ref{thm:i30}, we can find the general solution for the invariantized system (see it in 
Theorem~\ref{thm:last:conds}) in the following form:

\begin{thrm} \textbf{Simple Description of all Darboux transformations of bi-degree $(1,1)$.}
\label{thm:simple}
All Darboux transformations of some $\o{L}  = D_{x} D_y + a D_x + b D_y + c \in K[D]$
generated by some $\o{M}$, $\o{M}= D_x + q D_y + r \in K[D]$ are parametrized by $z \in K$ an arbitrary non constant and can be written
as
\[
\begin{aligned}
 q   &= - {\displaystyle \frac{B}{A}} \ , \\
 I_2 &= {\displaystyle B_y - A_x }\ , \\
 I_3 &= {\displaystyle - A_x +\frac{A \cdot B}{2} } \ , \\
\end{aligned}
\]
where ${\displaystyle A= \frac{ z_{xy}}{z_x}}$ and ${\displaystyle B=\frac{ z_{xy}}{z_y}}$.
\end{thrm}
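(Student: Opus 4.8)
The plan is to show that the three-parameter family displayed in the statement coincides with the general solution of the invariantized system from Theorem~\ref{thm:last:conds}, namely equations~\eqref{eq:i2}–\eqref{eq:i3}, and that conversely every such solution arises from some non-constant $z \in K$. The forward direction is already essentially Theorem~\ref{thm:DT:from:W} together with Theorem~\ref{thm:i30}: for arbitrary non-constant $z$, setting $A = z_{xy}/z_x$, $B = z_{xy}/z_y$, $q = -B/A = -z_x/z_y$, one checks that $I_2 = B_y - A_x$ satisfies $I_2 + Q_{xy} = 0$ with $Q = \ln q$ (a direct computation, since $Q = \ln(-z_x) - \ln z_y$ gives $Q_{xy} = (z_{xx}/z_x)_y - (z_{yy}/z_y)_x$, matching $-(B_y - A_x)$ up to the bookkeeping already done in the proof of Theorem~\ref{thm:DT:from:W}), and that $I_3 = -A_x + AB/2 = I_{30}(z)$ satisfies~\eqref{eq:i3} by Theorem~\ref{thm:i30}. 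So every member of the family is a genuine solution of the invariantized system, hence corresponds to a Darboux transformation.

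The substantive direction is completeness: given any solution $(q, I_2, I_3)$ of~\eqref{eq:i2}–\eqref{eq:i3}, I must produce a non-constant $z$ realizing it. First I would use~\eqref{eq:i2} to pin down $z$ up to the expected freedom: writing $q = -z_x/z_y$ is always possible locally (solve the first-order linear PDE $z_x + q z_y = 0$; any non-constant solution works, and two solutions differ by composition with an arbitrary function of one of them, consistent with the $F(z)$-invariance in Theorem~\ref{thm:i30}). For such a $z$, the identity $I_2 = -Q_{xy} = -(\ln q)_{xy}$ holds automatically, and this is exactly~\eqref{eq:i2}; so~\eqref{eq:i2} imposes no further constraint once $z$ is chosen compatibly with $q$ — it is an identity, not an equation to be solved. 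The real work is~\eqref{eq:i3}: I must show that the given $I_3$ equals $I_{30}(z)$ for a suitable choice of $z$ (equivalently, for a suitable $F$ in $z \mapsto F(z)$, exploiting the extra freedom). Equation~\eqref{eq:i3} is a first-order linear non-homogeneous PDE for $I_3$, so its solution set is $I_{30}(z_0) + \{\text{solutions of the homogeneous equation}\}$, where $z_0$ is one fixed primitive of the characteristic system. The homogeneous equation, by the characteristics computation sketched before the theorem (change of variables $\xi = x$, $\eta = z_0(x,y)$), has general solution of the form $G(z_0)\cdot \exp(\int^x T\, d\xi)$. The key claim is then: as $F$ ranges over all functions of one variable, $I_{30}(F(z_0))$ sweeps out exactly this coset — i.e. $I_{30}(F(z_0)) - I_{30}(z_0)$ realizes every homogeneous solution. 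I would verify this by substituting $w = F(z_0)$ into the defining formula~\eqref{eq:i30} and computing $I_{30}(w) - I_{30}(z_0)$ explicitly in terms of $F$, $F'$, $F''$ and the derivatives of $z_0$; the expression should collapse (after using $w_x = F'(z_0) z_{0,x}$ etc.) to a multiple of $\exp(\int^x T\, d\xi)$ times a function of $z_0$ alone, with the function of $z_0$ being a first-order expression in $F$ that can be prescribed arbitrarily by choosing $F$.

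I expect the main obstacle to be precisely this last computation — showing that $I_{30}(F(z))$, viewed as a function of the single free parameter $F$, hits the entire homogeneous solution space of~\eqref{eq:i3}, with no solutions missed and no hidden obstruction forcing $I_3$ outside the Wronskian family. The bookkeeping is heavy (third derivatives of a composite function), and one has to be careful that the "arbitrary function $G(z)$" in the homogeneous solution and the "arbitrary function $F(z)$" in Theorem~\ref{thm:i30} match up surjectively rather than merely both being infinite-dimensional. A secondary point to handle cleanly is the return trip from invariants to coefficients: the statement is phrased purely at the level of evolution invariants $(q, I_2, I_3)$, so once the invariant family is shown to be the full solution set of~\eqref{eq:i2}–\eqref{eq:i3}, the correspondence with actual operators $\o{L}, \o{M}$ and the existence of the two linearly independent kernel elements $\psi_1, \psi_2$ with $z = \psi_2/\psi_1$ is inherited from Theorem~\ref{thm:last:conds} and Theorem~\ref{thm:DT:from:W}; I would state this explicitly to close the loop, but it requires no new argument beyond noting that a non-constant $z$ with $\o{L}$-compatible first-order relation can be completed to such a ratio.
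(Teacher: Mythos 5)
Your plan coincides with the paper's own proof: choose $z$ with $q=-z_x/z_y$, note that \eqref{eq:i2} then fixes $I_2$ automatically, and show that $I_{30}(F(z))$ sweeps out the entire coset of solutions of \eqref{eq:i3} (particular solution plus the homogeneous space $G(z)\,z_xz_y$) as $F$ ranges over functions of one variable. The only detail your sketch mis-anticipates is that $I_{30}(F(z))-I_{30}(z)$ comes out as $z_xz_y$ times the Schwarzian derivative of $F$ evaluated at $z$ --- a third-order, not first-order, expression in $F$ --- but since the Schwarzian can be prescribed to equal an arbitrary function of $z$, the surjectivity your argument requires still holds, exactly as in the paper.
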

\begin{proof} Find $z \in K$ such that ${\displaystyle q = -\frac{z_x}{z_y}}$. Then
according to~(\ref{eq:i2}) we must have $I_2=-(\ln(q))_{xy}$, which in terms of $z$ has the form given for $I_2$ in
the statement of the theorem.

Now we solve~(\ref{eq:i3}) for $I_3$. The solution of this equation can be obtained as the sum of particular solution~(\ref{eq:i30})
and of the general solution of the corresponding homogeneous PDE,
\begin{equation} \label{eq:homo1}
 I_3 \left( \frac{z_{xx}}{z_x} - \frac{z_x z_{yy}}{z_y^2} \right) - I_{3,x} + \frac{z_x}{z_y} I_{3,y} =0 \ .
\end{equation}
Considering $I_3$ in the form $I_3 = e^J$ for suitable $J \in K[D]$ this equation can be re-written equivalently as
\begin{equation} \label{eq:1bis}
\frac{z_{xx}}{z_x^2} -  \frac{z_{yy}}{z_y^2} -  \frac{J_x}{z_x} + \frac{J_y}{z_y}  = 0 \ .
\end{equation}

Again we have homogeneous and non-homogeneous parts, and therefore, the general solution can be represented as
the sum of a particular solution of the non-homogeneous part and the general solution of the homogeneous one.

Using the methods of characteristics one can find the general solution of the homogeneous part of~(\ref{eq:1}).
Consider the equality
\[
\frac{dx}{-1/z_x} = \frac{dy}{1/z_y} \ ,
\]
which can be rewritten in the form $z_x dx + z_y d y  = d(z) = 0$, and, therefore, $z = z(x,y) = C$, where $C$ is a constant.
Therefore, we consider the following change of variables:
\[
 \left.
\begin{aligned}
 \xi &= x \ , \\
 \eta  &= z(x,y) \ ,
\end{aligned}
 \right\}
\]
which is non-degenerate, since the Jacobian is nonzero:
\[
\frac{\partial(\xi,\eta)}{\partial(x,y)}  = \left|
\begin{aligned}
  \xi_x \; & \; \xi_y \\
  \eta_x \;  & \; \eta_y
\end{aligned}
\right| = \eta_y = z_y \neq 0  \ .
\]
In the new variables equation~(\ref{eq:1}) has the form
\[
\frac{J_\xi}{ z_x(\xi,\eta)} =0 \ ,
\]
and, therefore, the general solution is
\[
 J = H(z) \ ,
\]
where $H(z)$ is an arbitrary function of $z$.

Notice now that since both $I_{30}(z)$ and $I_{30}(F(z))$ are solutions of~(\ref{eq:i3}), therefore,
their difference is a solution of~(\ref{eq:homo1}): subtracting~(\ref{eq:i30}) from $I_{30}(G(z))$ we have
\begin{equation} \label{eq:i30d}
I_{30d} = \frac{1}{2} z_x z_y \left( \frac{2 F' F''' - 3 (F'')^2 }{(F')^2} \right) =   z_x z_y G(z) \ ,
\end{equation}
where $G(z)$ is an arbitrary function. Therefore,
\[
J_{30d} = \ln(I_{30d}) = G_1(z) + \ln(z_x z_y) \ ,
\]
where $G_1(z)$ is an arbitrary function, is a solution of~(\ref{eq:1}). Therefore, the general solution of~(\ref{eq:1})
is
\[
 J = G_1(z) + \ln(z_x z_y) + H(z) \ .
\]
Correspondingly, the general solution of~(\ref{eq:homo1}), which is the homogeneous part of the PDE
we needed to solve,~(\ref{eq:i3}) is $I=\exp(J)$, which is
\[
 I = G(z) z_x z_y \ .
\]
Adding to this the particular solution of the non-homogeneous part, $I_{30}$, we conclude that
\[
I_3 = z_x z_y G(z) + I_{30}(z) = z_x z_y \frac{2 F' F''' - 3 (F'')^2 }{2(F')^2}  + I_{30}(z) \ .
\]
Notice that this expression is exactly $I_{30}(F(z))$. Now we proved that
\[
\begin{aligned}
 q   &= {\displaystyle -\frac{z_x}{z_y}}, \\
 I_2 &= {\displaystyle \left( \frac{ z_{xy}}{z_y} \right)_y - \left( \frac{ z_{xy}}{z_x} \right)_x }, \\
 I_3 &= I_{30}(F(z)) \ .
\end{aligned}
\]
However, notice that the application of $q$ to $z$ and $q$ to $F(z)$ gives the same result, that is
$q(F(z))=q(z)$, the same with $I_2$: $I_2(F(z))=I_2(z)$. Therefore, we can
substitute $\widetilde{z}=F(z)$ and have the statement of the theorem, which we write in terms of
$z$ again for convenience.
\end{proof}

Comparing Theorem~\ref{thm:simple} with Theorem~\ref{thm:DT:from:W},
we see that the general solution of the invariantized system  (see it in 
Theorem~\ref{thm:last:conds}) is much richer than the class of the invariantized particular solutions~\eqref{sys:DT:Wronskian}.
Does this mean that our hypothesis was wrong and that there must be something else besides Darboux transformations generated by
Darboux Wronskians formulas? In the following theorem lifting our results back into pre-invariantized situation we conclude the proof 
of our hypothesis.

\begin{thrm}
\label{thm:completeness}
Given operator $\o{L}  = D_{x} D_y + a D_x + b D_y + c \in K[D]$. Every its Darboux transformation
generated by $\o{M}$, $\o{M}= D_x + q D_y + r \in K[D]$ is described by Darboux Wronskian formulas.
\end{thrm}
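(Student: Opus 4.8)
\textbf{Proof proposal for Theorem~\ref{thm:completeness}.}

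The plan is to show that the two families of evolution invariants — the one in Theorem~\ref{thm:DT:from:W}, coming from Darboux Wronskian formulae, and the general solution of the invariantized system from Theorem~\ref{thm:simple} — describe the \emph{same} set of Darboux transformations, and then lift this identification back through the gauged evolution to the level of operators. First I would observe that the general solution in Theorem~\ref{thm:simple} is literally of the shape of~\eqref{sys:DT:Wronskian}: both are built from the same formulae $q=-B/A$, $I_2=B_y-A_x$, $I_3=-A_x+AB/2$ with $A=z_{xy}/z_x$, $B=z_{xy}/z_y$; the only difference is that in Theorem~\ref{thm:DT:from:W} the function $z$ is required to be the ratio $\psi_2/\psi_1$ of two linearly independent elements of $\Ker\o{L}$, while in Theorem~\ref{thm:simple} the parametrizing $z\in K$ is arbitrary and non-constant. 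So the crux is: given an arbitrary non-constant $z\in K$ defining (together with the given $\o{L}$) a valid triple of evolution invariants satisfying~\eqref{eq:i2}--\eqref{eq:i3}, produce two linearly independent solutions $\psi_1,\psi_2$ of $\o{L}u=0$ whose ratio yields the \emph{same} evolution invariants.

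The key steps, in order. (1) Start from a Darboux transformation of $\o{L}$ given by $\o{M}=D_x+qD_y+r$. By Theorem~\ref{thm:last:conds} its evolution invariants $(q,I_2,I_3)$ satisfy~\eqref{eq:i2}--\eqref{eq:i3}. Pick any $z\in K$, non-constant, with $q=-z_x/z_y$; this is possible since $q$ appears as a logarithmic-derivative-type quantity and integrating $z_x/z_y=-q$ along characteristics is exactly the kind of quadrature allowed by our standing assumption on $K$. (2) By~\eqref{eq:i2}, $I_2=-(\ln q)_{xy}$ is automatically the value dictated by this $z$ — matching the middle line of Theorem~\ref{thm:simple}. (3) For $I_3$: Theorem~\ref{thm:simple} shows the general solution of~\eqref{eq:i3} is $I_3=I_{30}(F(z))$ for a suitable function $F$, i.e. replacing $z$ by $\widetilde z=F(z)$ (which changes neither $q$ nor $I_2$) we may assume $I_3=I_{30}(\widetilde z)$ exactly. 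Thus after the harmless reparametrization our Darboux transformation has evolution invariants $\bigl(q,I_2,I_3\bigr)=\bigl(-\widetilde z_x/\widetilde z_y,\ (\widetilde z_{xy}/\widetilde z_y)_y-(\widetilde z_{xy}/\widetilde z_x)_x,\ I_{30}(\widetilde z)\bigr)$. (4) Now reconstruct the solutions: set $A=\widetilde z_{xy}/\widetilde z_x$, $B=\widetilde z_{xy}/\widetilde z_y$ and look for $\psi_1$ with $\psi_{1x}/\psi_1$ and $\psi_{1y}/\psi_1$ chosen so that $\psi_1,\ \psi_1\widetilde z$ both lie in $\Ker\o{L}$; the compatibility of this overdetermined first-order system for $\ln\psi_1$ is precisely what the equations $I_2+Q_{xy}=0$ and the $I_3$-equation~\eqref{eq:i3} encode (via Theorem~\ref{thm:last:conds}, read backwards), together with the defining relations $a,b,c$ of $\o{L}$ expressed through the gauge invariants $h,m$. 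Having $\psi_1$ and $\psi_1\widetilde z\in\Ker\o{L}$, linear independence is immediate since $\widetilde z$ is non-constant, and $\widetilde z=(\psi_1\widetilde z)/\psi_1$ is their ratio; by Theorem~\ref{thm:Dar:11} these two solutions build a Darboux transformation, and by Theorem~\ref{thm:DT:from:W} it has exactly the evolution invariants we matched. (5) Since, by the uniqueness result Theorem~\ref{thm:2} (our normal form has $\Sym(\o{M})=X+qY$, the non-exceptional case), a Darboux transformation is determined by $\o{L}$ and the gauged-evolution class of $\o{M}$, and the Wronskian-built transformation lies in the same class, the two transformations coincide; undoing the normalizations of Sec.~\ref{sec:proj} (expansion, composition with Laplace transformations, left multiplication by a function — all of which preserve being Wronskian-representable up to the Laplace factors, which are themselves Darboux transformations) gives the statement for $\o{M}$ of arbitrary total degree two.

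The main obstacle is step (4): passing from the invariant data $(q,I_2,I_3)$ back to an honest pair of kernel elements $\psi_1,\psi_2$. On the invariant level Theorem~\ref{thm:i30} lets us treat $z$ as an arbitrary function, "forgetting" that it must be a ratio of solutions; the content of completeness is that this forgetting loses nothing, i.e. that the overdetermined system for $\psi_1$ (whose unknowns are $(\ln\psi_1)_x,(\ln\psi_1)_y$, constrained by $\o{L}\psi_1=0$ and $\o{L}(\psi_1\widetilde z)=0$) is actually \emph{consistent} whenever~\eqref{eq:i2}--\eqref{eq:i3} hold. I expect this to reduce, after expressing everything through $h$, $m$ and $\widetilde z$, to an identity equivalent to the two equations of Theorem~\ref{thm:last:conds} — so the check is mechanical but must be done carefully, watching the $z_y\neq0$ genericity assumption and the role of $F$ in the reparametrization. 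Everything else (the symbol bookkeeping, the normal-form reductions, invoking uniqueness) is routine given the earlier results.
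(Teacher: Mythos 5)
Your overall strategy (match invariants via Theorems~\ref{thm:simple} and~\ref{thm:DT:from:W}, then lift back to operators) is the paper's strategy, but the two steps you flag as ``mechanical'' are exactly where the paper does something different from what you propose, and as written your argument has a genuine gap. In step (4) you try to produce $\psi_1$ with $\psi_1$ and $\psi_1\widetilde z$ both in the kernel of the \emph{original} $\o{L}$, claiming the compatibility of this overdetermined system ``is precisely what \eqref{eq:i2}--\eqref{eq:i3} encode.'' It cannot be: \eqref{eq:i2}--\eqref{eq:i3} constrain only the gauged-evolution invariants $(q,I_2,I_3)$, while the solvability of $\o{L}\psi_1=0$, $\o{L}(\psi_1\widetilde z)=0$ depends on the finer gauge data of the pair --- by \eqref{sys:inv:evolution}, a given $(q,I_2,I_3)$ is compatible with a whole family of gauge classes parametrized by $R$, and $(m,h)$ are only determined once $R$ is fixed. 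So the consistency check you defer is not a reformulation of Theorem~\ref{thm:last:conds}; it is essentially the whole content of the theorem, and you have not supplied it. The paper sidesteps this entirely: instead of solving for kernel elements of the given $\o{L}$, it \emph{builds} an operator $\o{L}'$ of the form~\eqref{op:L} that has $z_1$ and $zz_1$ in its kernel by construction (two linear equations for $a,b$ with a Wronskian determinant, leaving $c=c_0$ free), and then proves $\o{L}'$ is gauge-equivalent to $\o{L}$.

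The second gap is in step (5). Even granting a $\psi_1$ as in step (4), the Wronskian operator $\o{M}'$ of Theorem~\ref{thm:Dar:11} built from $\psi_1,\psi_1\widetilde z$ has the right $q$ but a specific $r'$ that need not equal the given $r$; and your appeal to Theorem~\ref{thm:2} does not close this, since that theorem gives uniqueness of the transformation for a \emph{given} $\o{M}$, not for a given gauged-evolution class --- two operators $\o{M}$ with the same $q$ but different $r$ define genuinely different Darboux transformations of the same $\o{L}$. Matching $r$ (equivalently the gauge invariant $R=r-b-qa$) is the decisive step in the paper's proof: it observes that once $R$ is matched, $m$ and $h$ are forced by $(q,I_2,I_3)$ via \eqref{sys:inv:evolution}, and it matches $R$ by showing that the $R'$ of the constructed pair $(\o{L}',\o{M}')$ depends affinely and invertibly on the free coefficient $c_0$. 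Your construction, once $\psi_1$ is chosen, has no remaining parameter to tune $r'$, so you would at best conclude that $\o{L}$ admits \emph{some} Wronskian Darboux transformation with the same symbol data, not that the given one is Wronskian. To repair the proposal you would need both (i) an existence mechanism with a free parameter (the paper's $c_0$, or the residual freedom in your overdetermined system for $\psi_1$, which you would then have to analyze) and (ii) the explicit argument that all four gauge invariants $(q,m,h,R)$ can be matched simultaneously, after which gauge equivalence of the pairs transports the Wronskian representation back to $(\o{L},\o{M})$.
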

\begin{proof}
Let $\o{N} \o{L} = \o{L}_1 \o{M}$ be some Darboux transformations with the gauged evolution invariants $(q,I_2,I_3)$.
According to Theorem~\ref{thm:simple} there exists $z \in K$ such that
\[
\begin{aligned}
 q   &=  -\frac{z_x}{z_y} =  -\frac{B}{A} \ , \\
 I_2 &=  \left( \frac{ z_{xy}}{z_y} \right)_y - \left( \frac{ z_{xy}}{z_x} \right)_x  = B_y - A_x \ , \\
 I_3 &= I_{30}(z) =  -A_x + \frac{A\cdot B}{2} \ ,
\end{aligned}
\]
where ${\displaystyle A = \frac{z_{xy}}{z_x}}$, ${\displaystyle B = \frac{z_{xy}}{z_y}}$.
Comparing this with the invariants given in Theorem~\ref{thm:DT:from:W}, we conclude
that there are some Darboux transformation constructed by Darboux formulae that has the same gauged evolution invariants.

Now let us find among the pairs that are constructed using Darboux formulas
and having these gauged evolution invariants $(q,I_2,I_3)$ those which has the
same gauge invariants as our initial pair $(\o{L},\o{M})$.

Let us denote the gauge invariants of the pair $(\o{L},\o{M})$ by $(m_0,h_0,q_0,R_0)$. Then
$I_2$ and $I_3$ can be expressed in terms of $(m_0,h_0,q_0,R_0)$, see~(\ref{sys:inv:evolution}).
Therefore,
\[
\begin{array}{ll}
 2m_0  &= I_2 - (R_0/q)_x + R_{0y} \ ,\\
 2h_0  &= I_3 + R_0^2/(2q) - (R_0/q)_x\ .\\
\end{array}
\]
Now, since $(I_2, I_3)$ are given in terms of $z$, then invariants $(m_0,h_0)$
are given in terms of $z$ and $R_0$.

That is it is enough to find among the pairs that are constructed using Darboux Wronskians formulas a pair with the same $R_0$.

Choose arbitrary functions $z_1$ and $c_0$ and construct an operator $\o{L}'$
of the form~(\ref{op:L}) which has two solutions: $z_1$ and $z z_1$ and that coefficient $c=c_0$.
Using the same pair of solutions construct $\o{M}'$ using Darboux formulas, see Theorem~\ref{thm:Dar:11}.

Then $R'$ corresponding to the pair $(\o{L}',\o{M}')$ can be expressed in terms of $z$, $z_1$, and $c_0$
in the form
\[
R' =- \frac{2 z_1 z_x}{-z_x z_{1,y}+z_y z_{1,x}} \cdot c_0 + T(z,z_1) \ ,
\]
where $T(z,z_1)$ certain expression depending on $z$ and $z_1$ only.
This means that for every $z_1$ we can uniquely find such $c$ that $R'=R_0$.

Therefore, for arbitrary $z_1$ there exist a Darboux transformation $(\o{L}',\o{M}')$ constructed using Darboux formulas for which
all gauge invariants $(h,k,q,R)$ of the pair are correspondingly the same as those of the initial pair $(\o{L},\o{M})$.

Since those agree, then $(\o{L},\o{M})$ is different from $(\o{L}',\o{M}')$ by a gauge transformation, and therefore,
 $(\o{L},\o{M})$ can be also constructed using Darboux Wronskian formulas.
\end{proof}

\section{Conclusions}

The present paper closes an essential question for the theory of Darboux transformations: Darboux Wronskians formulas are complete for 
Darboux transformation of total order two of operators $\o{L} = D_{x} D_y + a D_x + b D_y + c$ with non-constant coefficients.
Since for Darboux transformation of total order there are two famous exceptions: Laplace transformations, the case of 
the total order two has been crucial. 

We saw that newly introduced transformations of pairs, gauged evolutions may have much deeper role than just a tool 
in proof of our specific problem (Theorem~\ref{thm:i30} and the paragraph after it).

We found a very short invariant description of all possible Darboux transformation for $\o{L} = D_{x} D_y + a D_x + b D_y + c$
generated by $\o{M}$ in the form $\o{M}= D_x + q D_y + r \in K[D]$ (Theorem~\ref{thm:simple}).

Now it is natural to expect completeness of Darboux Wronskians formulas for transformations
of orders higher than two. We expect this one to be rather difficult to prove. Simple repetition and adjustments of the methods and 
ideas of this work would not work. For example, one of the crutial points was the introduction of the gauged evolutions, which cannot be defined for pairs $(\o{L},\o{M})$ if $\o{M}$ has order larger than $\o{L}$. 

\bibliographystyle{natbib}  
\bibliography{/home/ekaterina/Desktop/work/general.bib}{}

\end{document}